\newtheorem{theorem}{Theorem}
\newtheorem*{theorem*}{Theorem}
\newtheorem{lemma}[theorem]{Lemma}
\newtheorem{proposition}[theorem]{Proposition}
\newtheorem{corollary}[theorem]{Corollary}
\theoremstyle{remark}
\newtheorem*{remark*}{Remark}
\begin{document}
\title[Existence of GCD's and Factorization]
{Existence of GCD's and Factorization in Rings
of non-Archimedean Entire
Functions}
\author{William Cherry}
\thanks{Partial financial support provided
by the National Security Agency under
Grant Number H98230-07-1-0037.
The United States Government is authorized to reproduce and
distribute reprints not­withstanding any copyright notation herein.}
\address{Department of Mathematics\\University of North Texas\\ 
1155 Union Circle \#311430, Denton, TX  76203\\USA}
\email{wcherry@unt.edu}
\date{March 16, 2011}
\subjclass[2010]{Primary 32P05, 32A15; Secondary 13F15}
\keywords{non-Archimedean, entire functions, 
several variables, greatest common divisors, factorial,
Weierstrass Preparation Theorem}
\dedicatory{Dedicated to the memory of Nicole De Grande-De Kimpe
and to C.-C.~Yang and Alain Escassut commemorating the occasions
of their 65-th birthdays.}
\begin{abstract} A detailed proof is given of the well-known facts
that greatest common divisors exist in rings of non-Archimedean
entire functions of several variables and that these rings of entire
functions are almost factorial, in the sense that an entire function
can be uniquely written as a countable product of irreducible entire
functions.
\end{abstract}
\maketitle
In \cite{CY}, Ye and I  needed the fact that greatest common divisors
exist in rings of non-Archimedean entire functions of several variables.
In that paper, we wrote:
\begin{quote}
``by standard arguments (see any book
on several complex variables that discusses the Second Cousin Problem
and the Poincar\'e Problem), we need only consider \dots.''
\end{quote}
We then gave an argument of L\"utkebohmert \cite{Lu} that the essential
property held.  We left it to the reader to fill in the details that this
really did imply the existence of gcd's.  Cristina Toropu, now a 
Ph.D.\ student at the University of New Mexico, asked me to write up
a detailed discussion of the details Ye and I omitted from
the appendix to \cite{CY}.
The result is this short note, intended primarily for students and others
new to the subject of non-Archimedean analysis.  The arguments presented
are standard, but not, as far as I know, available in the literature in the
context of non-Archimedean entire functions. Part of what
I present closely parallels section~6.4 in Krantz \cite{Krantz}, where
he discusses algebraic properties of rings of analytic functions in
several complex variables.  Thus, this note also serves to illustrate
a useful principle that someone new to the subject should keep in mind:
\textit{if a theorem in complex analysis makes use of the local ring of germs
of analytic functions at a point, the appropriate substitute in 
non-Archimedean analysis is the ring of analytic functions
on a closed ball.}

\smallskip
The purpose of this note is to illustrate how
one transfers a local algebraic property, in this case the existence of
greatest common divisors in the ring of analytic functions on a closed
ball, to the global ring of entire functions. The algebraic properties of 
rings of analytic functions on closed balls, or more generally affinoid
domains, is broadly treated in books, and so I refer, for instance,
to \cite{BGR} for the fact that the ring of analytic functions on a
closed ball is factorial and for the proof of the Weierstrass Preparation
theorem.

\smallskip
I would like to emphasize that this note concerns functions of
\textit{several} variables. In one variable, it is not hard to see that
a non-Archimedean entire function factors into an infinite product of
the form
$$
         cz^e\prod_{i\in I}\left(1-\frac{z}{a_i}\right)^{e_i},
$$
where $c\in\mathbf{F},$ $e$ is a non-negative integer,
$I$ is a countable index set, the $e_i$ are positive integers,
and the $a_i$ are non-zero elements of
$\mathbf{F}$ with at most finitely many $a_i$
in any bounded subset of $\mathbf{F};$ compare with
Theorem~\ref{almostufd}. See \cite{Lazard} for a detailed treatment
of the one variable case.

\medskip
Given that this note resulted from discussions with a student and is
intended primarily to be read by students, I am pleased 
to dedicate this note to the memory of Nicole De Grande-De Kimpe,
to Chung-Chun Yang, and to Alain Escassut. 
Over the courses of their careers, each of these individuals
has been encouraging and supportive
of students and young mathematicians throughout the world.

\smallskip
I would like to thank Alain Escassut for 
suggesting I cite the work of Lazard and Salmon.
I would also like to thank the
anonymous referee for suggesting some improvements to this manuscript,
and in particular for pointing out that 
a somewhat lengthy ad-hoc proof of one of the implications
of Proposition~\ref{unit}
that I had in an early draft was not needed.

\medskip
Let $\mathbf{F}$ be an algebraically closed field complete with
respect to a non-trivial non-Archimedean absolute value, which
we denote by $|~|.$  Denote by 
$|\mathbf{F}^\times|$ the value group of $\mathbf{F},$
or in other words 
$$
	|\mathbf{F}^\times|=\{|a| : a\in\mathbf{F}^\times=\mathbf{F}\setminus\{0\}\}.
$$
Because $|~|$ is non-trivial and $\mathbf{F}$ is algebraically closed,
$|\mathbf{F}^\times|$ is dense in the positive real numbers.
Let $\mathbf{B}^m(r)$ denote the ``closed'' ball of radius $r$
in $\mathbf{F}^m,$ \textit{i.e.,}
$$
	\mathbf{B}^m(r)=\{(z_1,\dots,z_m)\in\mathbf{F}^m :
	\max |z_i| \le r\}.
$$
\textit{Henceforth, we will only consider $r\in|\mathbf{F}^\times|.$}
Denote by
$\mathcal{A}^m(r)$ the ring of analytic functions
on $\mathbf{B}^m(r),$ or in other words the sub-ring of formal
power series in the multi-variable $z=(z_1,\dots,z_m)$ with coefficients
in $\mathbf{F}$ converging on $\mathbf{B}^m(r),$ \textit{i.e.,}
$$
	\mathcal{A}^m(r) = \left\{\sum_\gamma a_\gamma z^\gamma :
	\lim_{|\gamma|\to\infty}|a_\gamma|r^{|\gamma|}=0\right\}.
$$
Note that we use multi-variable and multi-index notation throughout,
and that for a multi-index $\gamma=(\gamma_1,\dots,\gamma_m),$ we use
$|\gamma|$ to mean
$$
	|\gamma|=\gamma_1+\dots+\gamma_m.
$$
We recall that a multi-index \hbox{$\alpha=(\alpha_1,\dots,\alpha_m)$}
is said to be greater than
a multi-index \hbox{$\beta=(\beta_1,\dots,\beta_m)$}
in the \textbf{graded lexicographical order}
if \hbox{$|\alpha|>|\beta|$} or if \hbox{$|\alpha|=|\beta|$} and $\alpha$ is
greater than $\beta$ in the (ungraded) lexicographical ordering, which means
that for the smallest subscript $i$ such that $\alpha_i\ne\beta_i,$
we have that $\alpha_i>\beta_i.$ Comparing multi-indices or monomials
based on the graded lexicographical order simply means to first compare
the total degree and then to break ties between monomials
of the same total degree by using the lexicographical order.

Denote the quotient field of $\mathcal{A}^m(r),$ \textit{i.e.,}
the field of meromorphic functions on $\mathbf{B}^m(r),$
by $\mathcal{M}^m(r).$
We will also want to consider analytic and meromorphic functions 
that do not depend on the final variable $z_m,$ and 
for convenience, we denote
these by $\mathcal{A}^{m-1}(r)$ and $\mathcal{M}^{m-1}(r).$

Recall that the residue class field $\widetilde{\mathbf{F}}$ is defined by
$$
	\widetilde{\mathbf{F}}=\{a \in \mathbf{F} : |a|\le1\}/
	\{a \in \mathbf{F} : |a| < 1\}.
$$
A property will said to be true for an
$m$-tuple \hbox{$u=(u_1,\dots,u_m)$}
\textbf{over a generic residue class}
if $|u_j|\le 1$ for $1\le j \le m$ and
if the property holds for all such $u$ such that the reduction
\hbox{$\tilde u = (\tilde u_1,\dots,\tilde u_m)$}
lies outside the zero locus in $\widetilde{\mathbf{F}}^m$
of some non-zero polynomial
in $m$ variables with coefficients in $\widetilde{\mathbf{F}};$
note that $\widetilde{\mathbf{F}}$ is algebraically closed.

If
$$
	f(z)=\sum a_\gamma z^\gamma
$$
is an element of $\mathcal{A}^m(r),$ then denote by
$$
	|f|_r=\sup_\gamma |a_\gamma|r^{|\gamma|}.
$$
We begin with the non-Archimedean maximum modulus principle.

\begin{proposition}[Maximum Modulus Principle]\label{maxmod}
Let $f$ be an analytic function in $\mathcal{A}^m(r).$
Then, $|f(z)|\le|f|_r$ for all $z$ in $\mathbf{B}^m(r).$
Moreover, let $c$ be an element of $\mathbf{F}$ with $|c|=r.$
Then for \hbox{$u=(u_1,\dots,u_m)$} over a generic residue class,
$$
	|f(cu_1,\dots,cu_m)|=|f|_r.
$$
\end{proposition}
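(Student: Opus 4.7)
The plan is to treat the two assertions separately, with the inequality being an immediate consequence of the ultrametric inequality and the equality coming from a reduction-modulo-the-maximal-ideal argument.

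For the first assertion, I would simply write $f(z)=\sum a_\gamma z^\gamma$ and apply the non-Archimedean (strong) triangle inequality together with the bound $|z_i|\le r$ to obtain
$$
        |f(z)|\le \sup_\gamma |a_\gamma|\,|z^\gamma|\le \sup_\gamma |a_\gamma|r^{|\gamma|}=|f|_r.
$$
Since only finitely many $\gamma$ have $|a_\gamma|r^{|\gamma|}$ achieving the supremum (by the convergence condition $|a_\gamma|r^{|\gamma|}\to 0$), the sup is in fact a maximum.

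For the equality, let $c\in\mathbf{F}$ with $|c|=r$ and substitute to get
$$
        g(u):=f(cu_1,\dots,cu_m)=\sum_\gamma a_\gamma c^{|\gamma|}u^\gamma,
$$
a power series whose coefficients $b_\gamma=a_\gamma c^{|\gamma|}$ satisfy $|b_\gamma|=|a_\gamma|r^{|\gamma|}$, so $\max_\gamma|b_\gamma|=|f|_r$ and only finitely many $\gamma$ attain this maximum. Choose $\lambda\in\mathbf{F}^\times$ with $|\lambda|=|f|_r$ (possible because $|f|_r\in|\mathbf{F}^\times|$, as it equals $|a_\gamma|r^{|\gamma|}$ for some $\gamma$ and $r\in|\mathbf{F}^\times|$). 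Then $h(u):=\lambda^{-1}g(u)=\sum \tilde b_\gamma^{\,\flat} u^\gamma$ has all coefficients of absolute value $\le 1$, with only finitely many equal to $1$ in absolute value and the remainder having absolute value strictly less than $1$.

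The key step is now to reduce to $\widetilde{\mathbf{F}}$: the formal reduction $\overline{h}(\tilde u)\in\widetilde{\mathbf{F}}[\tilde u_1,\dots,\tilde u_m]$ is a non-zero polynomial (only the finitely many coefficients of absolute value $1$ survive the reduction, and at least one such exists). Whenever $\tilde u$ lies outside the zero locus of $\overline{h}$, the value $\overline{h(u)}=\overline{h}(\tilde u)$ is a non-zero element of $\widetilde{\mathbf{F}}$, forcing $|h(u)|=1$ and hence $|g(u)|=|\lambda|=|f|_r$. This is exactly the ``generic residue class'' statement. The main thing to verify carefully is that the reduction map on coefficients commutes with evaluation, which follows because $|u_j|\le 1$ and the non-Archimedean triangle inequality lets one drop all terms whose coefficient reduces to $0$ when computing $\overline{h(u)}$.
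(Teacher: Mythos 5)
Your proof is correct and follows essentially the same route as the paper's: reduce both sides by a constant of absolute value $|f|_r$ so that all coefficients lie in the valuation ring with at least one unit coefficient, pass to the residue field, observe that the reduction is a non-zero polynomial in $\widetilde{\mathbf{F}}[\tilde u_1,\dots,\tilde u_m],$ and read off the generic statement. The only stylistic difference is that you argue directly (outside the zero locus, $|h(u)|=1$) whereas the paper argues by contradiction (if $|f(cu)|<|f|_r$ then $\sum\tilde e_\gamma\tilde u^\gamma=0,$ a non-trivial polynomial relation); the paper normalizes by a particular top coefficient $b=a_{\gamma_0}c^{|\gamma_0|}$ rather than an abstract $\lambda$ with $|\lambda|=|f|_r,$ but these are interchangeable. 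Your explicit remark about why reduction commutes with evaluation of the infinite series (split off the finitely many unit-coefficient terms; the tail has $|\cdot|<1$ because $|a_\gamma|r^{|\gamma|}\to0$) is a worthwhile detail that the paper leaves implicit. You might note, for completeness, that the case $f\equiv0$ is trivial, since your choice of $\lambda$ requires $|f|_r>0.$
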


\begin{proof} See \cite[Prop.~5.1.4/3]{BGR}. I give the argument here
because a solid understanding of $|f|_r$ is fundamental to most of what
I do in this note.  Write
$$
	f(z)=\sum_\gamma a_\gamma z^\gamma.
$$
Then, we immediately have,
$$
	|f(z)| = \left|\sum_\gamma a_\gamma z^\gamma\right|
	\le \sup |a_\gamma||z^\gamma|
	\le \sup |a_\gamma|r^{|\gamma|}=|f|_r.
$$
To see that equality holds for $u$ above a generic residue class,
let $\Gamma$ be the set of multi-indices $\gamma$ such that
\hbox{$|a_\gamma|r^{|\gamma|}=|f|_r.$} Let $\gamma_0$ be a multi-index
in $\Gamma,$ and let $b=a_{\gamma_0}c^{|\gamma_0|}$ so that
$|b|=|f|_r.$
If
$$
	|f(cu_1,\dots,cu_m)| < |f|_r,
$$
then
$$
	\left|\sum_{\gamma\in\Gamma}a_\gamma c^{|\gamma|}u^\gamma\right|
	< |f|_r,
$$
and hence
$$
	\left|\sum_{\gamma\in\Gamma}e_\gamma u^\gamma\right|<1, 
	\qquad\textnormal{where~}e_\gamma=\frac{a_\gamma c^{|\gamma|}}{b}.
$$
Note that $|e_\gamma|\le1$ and that $e_{\gamma_0}=1.$ In terms of residue
classes, the previous inequality precisely means
$$
	\sum_{\gamma\in\Gamma} \tilde e_\gamma {\tilde u}^\gamma=0.
$$
This is a non-trivial polynomial relation, and hence we must have equality
over a generic residue class.
\end{proof}

\begin{corollary}\label{mult}
The real-valued function $|~|_r$ on $\mathcal{A}^m(r)$ is a
non-Archimedean absolute value on $\mathcal{A}^m(r).$
\end{corollary}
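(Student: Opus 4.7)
The plan is to verify the three defining properties of a non-Archimedean absolute value: definiteness, the strong (ultrametric) triangle inequality, and multiplicativity. Write $f=\sum a_\gamma z^\gamma$ and $g=\sum b_\gamma z^\gamma$ throughout.

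\emph{Definiteness and ultrametric inequality.} These are essentially formal. If $|f|_r=0$ then $|a_\gamma|r^{|\gamma|}=0$ for every $\gamma,$ and since $r>0$ this forces every $a_\gamma=0,$ so $f=0.$ For the triangle inequality, the $\gamma$-th coefficient of $f+g$ is $a_\gamma+b_\gamma,$ and the non-Archimedean property of $|~|$ on $\mathbf{F}$ gives
$$
	|a_\gamma+b_\gamma|r^{|\gamma|}\le\max\{|a_\gamma|r^{|\gamma|},|b_\gamma|r^{|\gamma|}\}\le\max\{|f|_r,|g|_r\},
$$
and taking the sup over $\gamma$ yields $|f+g|_r\le\max\{|f|_r,|g|_r\}.$

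\emph{Multiplicativity.} This is the key step. The easy inequality $|fg|_r\le|f|_r|g|_r$ follows from the Cauchy product: the $\gamma$-th coefficient of $fg$ is $\sum_{\alpha+\beta=\gamma}a_\alpha b_\beta,$ so
$$
	\left|\sum_{\alpha+\beta=\gamma}a_\alpha b_\beta\right|r^{|\gamma|}\le\sup_{\alpha+\beta=\gamma}|a_\alpha|r^{|\alpha|}|b_\beta|r^{|\beta|}\le|f|_r|g|_r.
$$
For the reverse inequality I would invoke Proposition~\ref{maxmod}. Choose $c\in\mathbf{F}$ with $|c|=r.$ The maximum modulus principle guarantees that $|f(cu)|=|f|_r$ for $u$ over a generic residue class, and likewise $|g(cu)|=|g|_r$ over a generic residue class. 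The intersection of the two ``generic'' conditions is again generic (the union of the two exceptional polynomial zero loci is the zero locus of the product, which is again a non-zero polynomial over $\widetilde{\mathbf{F}}$). Pick any such $u;$ then
$$
	|f|_r|g|_r=|f(cu)|\,|g(cu)|=|(fg)(cu)|\le|fg|_r,
$$
where the last inequality is the pointwise bound half of Proposition~\ref{maxmod}. Combining the two inequalities gives $|fg|_r=|f|_r|g|_r.$

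The main obstacle is the multiplicativity direction $|fg|_r\ge|f|_r|g|_r,$ which is not visible directly from the coefficient definition; it is precisely for this reason that the maximum modulus principle was established first. Everything else is formal manipulation of the series.
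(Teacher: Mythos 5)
Your proof is correct and rests on the same key ingredient as the paper's, namely Proposition~\ref{maxmod} and the fact that a finite intersection of generic conditions is still generic. The only difference is cosmetic: the paper picks a single point $(a_1,\dots,a_m)$ generic simultaneously for $f,$ $g,$ and $fg,$ so that all three norms $|f|_r,$ $|g|_r,$ $|fg|_r$ are attained there and multiplicativity follows in one step; you instead prove the easy bound $|fg|_r\le|f|_r|g|_r$ from the Cauchy product and then need a point generic only for $f$ and $g,$ closing the argument with the pointwise inequality $|(fg)(cu)|\le|fg|_r.$ Either way the burden of the proof is carried by the genericity statement in the maximum modulus principle, so the approaches are essentially the same.
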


\begin{proof}
Let $f$ and $g$ be analytic functions in $\mathcal{A}^m(r).$
That
$$
       |f+g|_r\le\max\{|f|_r,|g|_r\}
$$
follows directly from
the fact that $|~|$ is a non-Archimedean absolute value on
$\mathbf{F}.$ To check multiplicativity, note that by
Proposition~\ref{maxmod}, there exists a point $(a_1,\dots,a_m)$
in $\mathbf{B}^m(r)$ such that
\begin{align*}
	|f|_r &= |f(a_1,\dots,a_m)|,\\
	|g|_r &= |g(a_1,\dots,a_m)|, \textnormal{~and}\\
	|fg|_r &= |f(a_1,\dots,a_m)g(a_1,\dots,a_m)|,
\end{align*}
and so the multiplicativity of $|~|_r$
also follows from the multiplicitivity of $|~|.$
\end{proof}

Note that we may extend $|~|_r$ to a non-Archimedean
absolute value on $\mathcal{M}^m(r)$ by multiplicativity.

\begin{proposition}[{\cite[Th.~31.14]{Escassut}}]\label{unit}
An analytic function of the form
$$
	u(z)=1+\sum_{|\gamma|\ge1}a_\gamma z^\gamma
$$
is a unit in $\mathcal{A}^m(r)$ if and only if
$$
	\sup_{|\gamma|\ge1}|a_\gamma|r^{|\gamma|}<1.
$$
\end{proposition}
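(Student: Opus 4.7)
The plan is to treat the two implications separately. The reverse direction is a routine Neumann-series calculation; the substantive content is in the forward direction, which requires first pinning down $|u|_r$ exactly and then passing to the residue field to rule out a boundary case.

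For $(\Leftarrow)$, set $v=u-1$, so that $|v|_r<1$ by hypothesis. Multiplicativity of $|\cdot|_r$ from Corollary~\ref{mult} gives $|(-v)^n|_r=|v|_r^n\to 0$, so the partial sums of $\sum_{n\ge 0}(-v)^n$ form a Cauchy sequence in the complete normed ring $\mathcal{A}^m(r)$. The limit $w$ satisfies $uw=(1+v)w=1$, so $u$ is a unit.

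For $(\Rightarrow)$, suppose $uw=1$ with $w\in\mathcal{A}^m(r)$. Since $u(0)w(0)=1$ and $u(0)=1$, we have $w(0)=1$, which gives $|u|_r\ge 1$ and $|w|_r\ge 1$. Combined with $|u|_r|w|_r=|1|_r=1$ from Corollary~\ref{mult}, this forces $|u|_r=|w|_r=1$, i.e.\ $|a_\gamma|r^{|\gamma|}\le 1$ for all $\gamma$. The remaining task is to rule out equality when $|\gamma|\ge 1$, which is the step I expect to be the main obstacle since it requires leaving the ring $\mathcal{A}^m(r)$.

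To exclude that case I would pick $c\in\mathbf{F}^\times$ with $|c|=r$ and use the isometric isomorphism $\mathcal{A}^m(r)\to\mathcal{A}^m(1)$ given by $f(z)\mapsto f(cz)$ to reduce to the Tate-algebra case $r=1$. Both $u$ and $w$ then lie in the sub-ring $\mathcal{A}^\circ=\{f:|f|_1\le 1\}$, and since the coefficients of any element of $\mathcal{A}^m(1)$ tend to $0$, only finitely many of them can have absolute value $1$. Consequently, reducing modulo the maximal ideal of $\mathcal{A}^\circ$ sends $u$ and $w$ into the \emph{polynomial} ring $\widetilde{\mathbf{F}}[z_1,\dots,z_m]$. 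The relation $uw=1$ passes to the reduction, so $\tilde u$ is a unit in $\widetilde{\mathbf{F}}[z_1,\dots,z_m]$, hence a non-zero element of $\widetilde{\mathbf{F}}$. But if some $|a_\gamma|r^{|\gamma|}=1$ with $|\gamma|\ge 1$, the corresponding non-constant monomial survives in $\tilde u$, contradicting $\tilde u$ being a constant.
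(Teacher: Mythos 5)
Your proof is correct, but the forward direction takes a genuinely different route from the paper. The paper intentionally \emph{defers} the converse: it first proves Proposition~\ref{Wnotunit} (a Weierstrass polynomial of positive degree is not a unit), states the Weierstrass Preparation Theorem~\ref{wprep} (cited from \cite{BGR}), and proves Proposition~\ref{changevars} (a generic linear change of variables makes $f$ $z_m$-distinguished, using only the easy direction of the present proposition). Only then does it complete Proposition~\ref{unit}: if $\sup_{|\gamma|\ge1}|a_\gamma|r^{|\gamma|}\ge1$, a change of variables makes $u$ $z_m$-distinguished of positive degree, Weierstrass Preparation writes $u=vW$ with $W$ of positive degree, and Proposition~\ref{Wnotunit} shows $u$ cannot be a unit. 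Your argument instead rescales to the Tate algebra $\mathcal{A}^m(1)$, shows $|u|_1=|w|_1=1$ from multiplicativity, and reduces the relation $uw=1$ to $\widetilde{\mathbf{F}}[z_1,\dots,z_m]$, where units are nonzero constants. This is more self-contained (no change of variables, no Weierstrass theory) and is the standard argument in the affinoid literature, while the paper's version ties the proposition into the Weierstrass machinery it builds anyway. One terminological slip worth correcting: the ideal $\mathcal{A}^{\circ\circ}=\{f:|f|_1<1\}$ of $\mathcal{A}^\circ$ is not maximal --- the quotient $\mathcal{A}^\circ/\mathcal{A}^{\circ\circ}\cong\widetilde{\mathbf{F}}[z_1,\dots,z_m]$ is a polynomial ring, not a field --- but your argument only needs the quotient to be this polynomial ring, so nothing breaks. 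You should also note explicitly that the reduction map is a ring homomorphism because the convolution sums defining the coefficients of a product are finite, and that the $\sup$ over $|\gamma|\ge1$ is attained (the coefficients tend to $0$), so some $\tilde a_\gamma\ne0$ with $|\gamma|\ge1$ genuinely survives.
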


\begin{proof}
If $u=1-f,$ where $f$ is in $\mathcal{A}^m(r)$
with $|f|_r<1,$ then
$$
	1+f+f^2+f^3+\dots
$$
converges to a function $v$ such that $uv=1,$ and so $u$ is a unit.
\par
We will postpone the proof of the converse until later.
\end{proof}

Following \cite{BGR}, but working with $\mathcal{A}^m(r)$
instead of just $\mathcal{A}^m(1),$ we say that an analytic function
$$
	f(z)=f(z_1,\dots,z_m)=\sum_\gamma a_\gamma z^\gamma
	= \sum_{j=0}^\infty A_j(z_1,\dots,z_{m-1})z_m^j
$$\goodbreak\noindent
in $\mathcal{A}^m(r)$ thought of as a power series in $z_m$ alone with
coefficients in $\mathcal{A}^{m-1}(r)$ is
\textbf{\hbox{$z_m$-distinguished} of degree $n$}
\begin{itemize}
\item if $A_n(z_1,\dots,z_{m-1})$
is a unit in $\mathcal{A}^{m-1}(r),$ 
\item if \hbox{$|f|_r=|A_n|_rr^n,$} 
\item and if
\hbox{$|A_j|_rr^j < |A_n|_rr^n$} for all $j>n.$
\end{itemize}
The function $f$ is called
simply $z_m$-distinguished if it is $z_m$-distinguished of degree $n$
for some $n\ge0.$ Note that if $f$ is $z_m$-distinguished of degree $0,$
then $f$ is a unit in $\mathcal{A}^m(r)$ by 
Proposition~\ref{unit}.
An element $W$ of
$\mathcal{A}^{m-1}(r)[z_m],$ \textit{i.e.,} a polynomial in the last
variable $z_m$ with coefficients analytic, but not necessarily polynomial,
in the first $m-1$ variables, of degree $n$ in $z_m$
is called a \textbf{Weierstrass polynomial}
if $W$ is monic and if $|W|_r=r^n.$

\begin{proposition}\label{Wnotunit}
A Weierstrass polynomial of positive degree is not a unit.
\end{proposition}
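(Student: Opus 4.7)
The plan is to exhibit a zero of $W$ in $\mathbf{B}^m(r)$: if $Wg = 1$ in $\mathcal{A}^m(r)$, then evaluation at such a zero forces $0 = 1$, yielding the contradiction. So everything reduces to finding a point of $\mathbf{B}^m(r)$ at which $W$ vanishes.

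To produce one, I would restrict to the coordinate line $z_1 = \cdots = z_{m-1} = 0$ and consider the one-variable polynomial
$$
p(z_m) = W(0,\dots,0,z_m) = z_m^n + A_{n-1}(0)z_m^{n-1} + \cdots + A_0(0) \in \mathbf{F}[z_m].
$$
Since $\mathbf{F}$ is algebraically closed and $n \ge 1$, this monic polynomial has a root $\zeta \in \mathbf{F}$. The substantive step is to verify that $|\zeta| \le r$, so that $(0,\dots,0,\zeta)$ actually lies in $\mathbf{B}^m(r)$.

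For this bound, the Weierstrass condition $|W|_r = r^n$ combined with the sup-norm formula forces $|A_j|_r \le r^{n-j}$ for $0 \le j \le n-1$, and Proposition~\ref{maxmod} then gives $|A_j(0)| \le r^{n-j}$. Rearranging $\zeta^n = -\sum_{j<n} A_j(0)\zeta^j$ and applying the non-Archimedean triangle inequality yields $|\zeta|^n \le \max_{j<n} r^{n-j}|\zeta|^j$, which rearranges to $|\zeta| \le r$. I do not anticipate any serious obstacle here; the whole argument is essentially a Newton-polygon style reality check showing that the distinguished structure of $W$ places the coefficients at precisely the scale that confines the root to the closed ball.
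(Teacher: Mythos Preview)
Your proof is correct and follows essentially the same approach as the paper: restrict to the line $z_1=\cdots=z_{m-1}=0$, use the Weierstrass normalization $|W|_r=r^n$ to bound the coefficients, and conclude that the one-variable polynomial $W(0,\dots,0,z_m)$ has a root in $|\zeta|\le r$. The only cosmetic difference is that the paper factors $W(0,\dots,0,z_m)=\prod(z_m-b_j)$ and uses multiplicativity of $|\cdot|_r$ to force some $|b_j|\le r$, whereas you bound a single root directly via $|\zeta|^n\le\max_{j<n}r^{n-j}|\zeta|^j$; your version in fact shows every root lies in the ball.
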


\begin{proof}
Let
$$
    W(z_1,\dots,z_m)=A_0(z_1,\dots,z_{m-1})+\dots+A_{d-1}(z_1,\dots,z_{m-1})z_m^{d-1}+z_m^d
$$
be a Weierstrass polynomial of degree $d>0$ in
$\mathcal{A}^{m-1}(r)[z_m].$ Factor the one-variable monic polynomial
$$
    W(0,\dots,0,z_m)= (z_m-b_1)\cdots(z_m-b_d).
$$
I claim that for some $j$ from $1$ to $d,$ we must have that
$|b_j|\le r.$ For if not, then
$$
   |W(0,\dots,0,z_m)|_r = |z_m-b_1|_r\cdots|z_m-b_d|_r > r^d,
$$
which, by Proposition~\ref{maxmod}, contradicts the hypothesis
that $|W|_r=r^d.$ Hence, there is some $b$ with $|b|\le r$ such that
$W(0,\dots,0,b)=0,$ and hence $W$ is not a unit, as was to be shown.
\end{proof}

I now state the important
\begin{theorem}[{Weierstrass Preparation Theorem \cite[Th.~5.2.2/1]{BGR}}]\label{wprep}
If an analytic function $f$ in $\mathcal{A}^m(r)$
is $z_m$-distinguished of degree $n,$
then there is a unique Weierstrass polynomial 
\hbox{$W\in\mathcal{A}^{m-1}(r)[z_m]$}
of degree $n$ and a unique unit $u$ in $\mathcal{A}^m(r)$ such that
$f=uW.$
\end{theorem}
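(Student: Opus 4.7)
My plan is to deduce the theorem from a \emph{Weierstrass Division} theorem: for every $g\in\mathcal{A}^m(r)$ there exist unique $q\in\mathcal{A}^m(r)$ and $s\in\mathcal{A}^{m-1}(r)[z_m]$ with $\deg_{z_m}s<n$ such that $g=qf+s$, satisfying the bounds $|qf|_r,|s|_r\leq|g|_r$. Applying this to $g=z_m^n$ and setting $W:=z_m^n-s$ and $u:=q^{-1}$, the polynomial $W$ is monic of $z_m$-degree $n$; the bound $|s|_r\leq r^n$ prevents cancellation of the leading $z_m^n$-term, so $|W|_r=r^n$ and $W$ is a Weierstrass polynomial. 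Uniqueness of $(u,W)$ will follow from uniqueness in Division applied to $z_m^n$.

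\smallskip

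To verify that $q$ is a unit, Corollary~\ref{mult} applied to $qf=W$ gives $|q|_r=r^n/|f|_r=|A_n|_r^{-1}$. Writing $q=\sum_k Q_kz_m^k$ and letting $k_1$ be the largest index with $|Q_{k_1}|_rr^{k_1}=|q|_r$, the pair $(k_1,n)$ is the unique norm-maximizing summand of the $z_m^{k_1+n}$-coefficient $\sum_{k+j=k_1+n}Q_kA_j$ of $qf$ (any $k>k_1$ drops $|Q_k|_rr^k$ below $|q|_r$; any $j>n$ drops $|A_j|_rr^j$ below $|f|_r$ by the distinguished hypothesis), so that coefficient is nonzero; since $qf=W$ has $z_m$-degree $\leq n$, we need $k_1=0$, giving $|Q_0|_r=|q|_r$ and $|Q_k|_rr^k<|q|_r$ for $k\geq1$. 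Reading off the $z_m^n$-coefficient identity $Q_0A_n+\sum_{k\geq1}Q_kA_{n-k}=1$ and using $|Q_kA_{n-k}|_r<1$ for $k\geq1$ makes $Q_0A_n$ a unit by Proposition~\ref{unit}, whence $Q_0$ is a unit in $\mathcal{A}^{m-1}(r)$. Finally $q=Q_0(1+w)$ with $w=\sum_{k\geq1}(Q_k/Q_0)z_m^k$ and $|w|_r<1$, so $q$ is a unit by Proposition~\ref{unit}.

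\smallskip

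For Division itself, let $\sigma,\rho$ be the $\mathcal{A}^{m-1}(r)$-linear projections with $g=z_m^n\sigma(g)+\rho(g)$ and $\rho(g)$ a polynomial in $z_m$ of degree $<n$; then $|\sigma(g)|_r\leq|g|_r/r^n$. The distinguished hypothesis makes $\sigma(f)/A_n=1+v$ with $|v|_r<1$, so $\sigma(f)$ is a unit by Proposition~\ref{unit}. Projecting the desired identity $g=q(z_m^n\sigma(f)+\rho(f))+s$ onto the decomposition of $\mathcal{A}^m(r)$ as $z_m^n\mathcal{A}^m(r)$ plus polynomials in $z_m$ of degree $<n$ converts Division into
$$
q=T(q):=\sigma(f)^{-1}\bigl(\sigma(g)-\sigma(\rho(f)q)\bigr),\qquad s:=\rho(g)-\rho(\rho(f)q).
$$
Uniqueness of $(q,s)$ follows from the same dominant-coefficient argument as above.

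\smallskip

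The main obstacle is existence of the fixed point of $T$. Direct estimates give only non-expansion $|T(q_1)-T(q_2)|_r\leq|q_1-q_2|_r$, since the distinguished hypothesis does not force $|\rho(f)|_r<|A_n|_rr^n$. My plan to overcome this is the observation that the $z_m^{k'}$-coefficient of $\sigma(\rho(f)q)$ depends only on the $Q_k$ with $k\geq k'+1$ (because $j<n$ and $k+j=k'+n$ force $k>k'$), so iterating the operator $L:=\sigma(f)^{-1}\sigma(\rho(f)\,\cdot)$ pushes the dependence of each $z_m^{k'}$-coefficient out to arbitrarily high-index $Q_k$'s, which are vanishingly small by the convergence of $q$ on $\mathbf{B}^m(r)$. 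Together with the completeness of $\mathcal{A}^m(r)$ under $|\cdot|_r$, this makes the Neumann series $q=\sum_{k\geq0}(-L)^k\bigl(\sigma(f)^{-1}\sigma(g)\bigr)$ converge to the desired fixed point.
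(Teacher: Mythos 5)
The paper does not prove this theorem: it simply records the statement and cites \cite[Th.~5.2.2/1]{BGR}. So your proposal should be judged against the standard proof in \cite{BGR}, which normalizes to $|f|_r=1$, passes to the residue ring $\widetilde{\mathbf{F}}[z_1,\dots,z_m]$ where $\tilde f$ is an honest polynomial of $z_m$-degree $n$ with unit leading coefficient, performs ordinary polynomial division there, lifts, and iterates; the contraction comes ``for free'' from working modulo $\{h:|h|_r<1\}$. Your route --- deduce Preparation from Division, and prove Division by a Neumann series for the fixed-point operator $T$ --- is a genuinely different, more analytic strategy, and your reductions (applying Division to $z_m^n$, the dominant-coefficient argument showing $q$ is a unit and giving uniqueness, and the bound $|s|_r\le r^n$ forcing $W$ to be Weierstrass) are all correct and clean.

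The gap is in the convergence of the Neumann series, and it is a real one. You correctly observe that the $z_m^{k'}$-coefficient of $\sigma(\rho(f)q)$ depends only on $Q_k$ with $k>k'$, so iterating $M:=\sigma(\rho(f)\,\cdot)$ pushes the index out. But $L=\sigma(f)^{-1}M$ includes multiplication by $\sigma(f)^{-1}$, which is a genuine power series in $z_m$ (unless $f$ is already a polynomial in $z_m$ of degree $n$). Multiplication by $\sigma(f)^{-1}=\sum_l C_l z_m^l$ folds all lower-degree data back in: the $z_m^{k'}$-coefficient of $\sigma(f)^{-1}h$ involves $h_j$ for every $j\le k'$, so the $z_m^{k'}$-coefficient of $L^N(h)$ still depends on $Q_1$ for every $N$. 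Thus the stated ``pushing out'' does not happen for $L$. And even for $M$ alone, knowing that each fixed coefficient of $M^N(h)$ depends only on high-index $Q_k$'s gives coefficient-wise decay but not $|M^N(h)|_r\to0$; the sup over $k'$ has to be controlled uniformly, which is not automatic.

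The argument is salvageable, but only by using a piece of the $z_m$-distinguished hypothesis that your sketch does not invoke: $|A_j|_rr^j<|A_n|_rr^n$ for $j>n$, which (together with $|A_j|_rr^j\to0$) gives $|\sigma(f)-A_n|_r<|A_n|_r$, so $\sigma(f)^{-1}=A_n^{-1}(1+w)$ with $|w|_r<1$. Splitting $c:=\sigma(f)^{-1}\rho(f)=A_n^{-1}\rho(f)+e$ with $|e|_r<r^n$ strictly, each step of the recursion $R_{N+1}=-c\,\sigma(R_N)$ either uses the degree-$<n$ polynomial part of $c$, which provably raises by at least one the index of $g$ on which a given coefficient of $R_N$ depends, or uses the $e$-part, which costs a fixed factor $<1$. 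A case analysis (large index, hence small coefficient of $g$, versus many $e$-steps, hence small product) then yields $|R_N|_r\to0$. This is several nontrivial steps beyond what you wrote, and it is precisely the content that the residue-ring argument in \cite{BGR} finesses. As it stands your ``main obstacle'' paragraph identifies the right difficulty but does not overcome it.
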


\begin{proposition}\label{wpfactor}
Let $f_1,f_2 ,W\in\mathcal{A}^{m-1}(r)[z_m]$ such that $W=f_1f_2$ and such that
$W$ is a Weierstrass polynomial. Then, there exist units $u_1$ and $u_2$
in $\mathcal{A}^{m-1}(r)$ such that $f_j/u_j$ are also Weierstrass polynomials.
\end{proposition}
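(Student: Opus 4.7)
The plan is to let the leading coefficients of $f_1$ and $f_2$ (viewed as polynomials in $z_m$) play the role of $u_1$ and $u_2$, and then use the multiplicativity of $|\,\cdot\,|_r$ to force the Gauss--norm equalities required for a Weierstrass polynomial.

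Write $f_1=\sum_{j=0}^{d_1} A_j z_m^j$ and $f_2 = \sum_{k=0}^{d_2} B_k z_m^k$ with $A_{d_1}, B_{d_2}\in\mathcal{A}^{m-1}(r)$ nonzero. Since $\mathcal{A}^{m-1}(r)$ is an integral domain (Corollary~\ref{mult}), the product $f_1 f_2$ has degree exactly $d_1+d_2$ in $z_m$ with leading coefficient $A_{d_1}B_{d_2}$. Because $W$ is monic of degree $n$, this forces $d_1+d_2 = n$ and $A_{d_1}B_{d_2}=1$, so $u_1:=A_{d_1}$ and $u_2:=B_{d_2}$ are units in $\mathcal{A}^{m-1}(r)$.

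Next I would pin down the Gauss norms. For any polynomial $g=\sum C_j z_m^j$ in $\mathcal{A}^{m-1}(r)[z_m]$, the definition of $|\,\cdot\,|_r$ gives
$$
    |g|_r = \max_j |C_j|_r r^j.
$$
In particular $|f_1|_r\ge |A_{d_1}|_r r^{d_1}$ and $|f_2|_r\ge |B_{d_2}|_r r^{d_2}$. By Corollary~\ref{mult} (multiplicativity of $|\,\cdot\,|_r$ on $\mathcal{A}^m(r)$, hence on the subring $\mathcal{A}^{m-1}(r)[z_m]$),
$$
    r^n = |W|_r = |f_1|_r\,|f_2|_r
    \ge |A_{d_1}|_r |B_{d_2}|_r\, r^{d_1+d_2}
    = |A_{d_1}B_{d_2}|_r\, r^n = r^n,
$$
so equality must hold at each step. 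Consequently $|f_j|_r = |u_j|_r r^{d_j}$ for $j=1,2$.

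Finally, $f_j/u_j\in\mathcal{A}^{m-1}(r)[z_m]$ is monic of degree $d_j$ in $z_m$ (dividing out the unit leading coefficient), and
$$
    |f_j/u_j|_r = |f_j|_r / |u_j|_r = r^{d_j},
$$
so $f_j/u_j$ satisfies both conditions in the definition of a Weierstrass polynomial. The only subtle step in the argument is recognizing that multiplicativity of $|\,\cdot\,|_r$ (which relied on the maximum modulus principle) is exactly what is needed to rule out cancellation that could otherwise make $|f_j|_r$ strictly larger than $|u_j|_r r^{d_j}$; everything else is bookkeeping about leading coefficients.
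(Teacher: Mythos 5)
Your proof is correct, and it takes essentially the same route as the paper's: both identify $u_j$ as the leading coefficients, use that $W$ monic forces $u_1u_2=1$, and then invoke multiplicativity of $|\,\cdot\,|_r$ (Corollary~\ref{mult}) to control the Gauss norms. The only presentational difference is that you work directly with the equality $|f_1|_r|f_2|_r = r^n$ and squeeze $|f_j|_r = |u_j|_r r^{d_j}$ in one step, whereas the paper spells out the coefficient-by-coefficient bound $|A_{j,i}/A_{j,d_j}|_r\,r^i\le r^{d_j}$; these are the same argument, yours slightly more streamlined.
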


\begin{proof}
This proof is similar to \cite[Lemma~6.4.8]{Krantz}.

Let $d,$ $d_1$ and $d_2$ be the degrees of $W,$ $f_1,$ and $f_2$
respectively thought of as polynomials in $z_m.$ For $j=1,2,$ write
$$
	f_j=A_{j,d_j}(z_1,\dots,z_{m-1})z_m^{d_j}+\dots+
	A_{j,0}(z_1,\dots,z_{m-1}).
$$
Then, because $W$ is monic,
$$
	z_m^d+\dots=\left(A_{1,d_1}z_m^{d_1}+\dots+
	A_{1,0}\right)\cdot
	\left(A_{2,d_2}z_m^{d_1}+\dots+
	A_{2,0}\right),
$$
and hence $A_{1,d_1}$ and $A_{2,d_2}$ are units in
$\mathcal{A}^{m-1}(r),$ 
$$
	|A_{1,d_1}|_r\cdot|A_{2,d_2}|_r=1,
$$
and
$$
	\max_{\begin{array}{c}\scriptstyle 0\le i_1 \le d_1\\ \scriptstyle 
	0\le i_2 \le d_2\end{array}}
	|A_{1,i_1}|_r\cdot|A_{2,i_2}|_r r^{i_1+i_2} \le r^d.
$$
For $j=1,2,$ let $W_j=f_j/A_{j,d_j}.$ Then, $W_1$ and $W_2$
are monic and if $\{j,k\}=\{1,2\},$ we have
$$
	\left|\frac{A_{j,i}}{A_{j,d_j}}\right|_r r^i =
	|A_{j,i}|_r\cdot|A_{k,d_k}|_r \cdot r^i
	\cdot\frac{r^{d_k}}{r^{d_k}} \le \frac{r^d}{r^{d_k}}=r^{d_j},
$$
which precisely means that $W_1$ and $W_2$ are Weierstrass polynomials.
\end{proof}

As the Weierstrass Preparation Theorem only applies to 
$z_m$-distinguished functions, we need to know that every function can
be made to be $z_m$-distinguished after a simple change of variables.
The standard reference \cite[Prop.~5.2.4/2]{BGR} uses a non-linear
coordinate change, but a linear coordinate change will be more useful
for our purposes here. Let \hbox{$u=(u_1,\dots,u_{m-1})$} be an
$m-1$-tuple of elements $u_j$ in $\mathbf{F}$ with $|u_j|\le1.$
We consider the $\mathbf{F}$-algebra automorphism $\sigma_u$
of $\mathcal{A}^m(r)$ defined by
$$
	\sigma_u(z_1,\dots,z_m)=
	(z_1+u_1z_m,\dots,z_j+u_jz_m,\dots,z_{m-1}+u_{m-1}z_m,z_m).
$$
The homomorphism $\sigma_u$ is easily seen to be an automorphism
by observing that its inverse is given by
$$
	\sigma_u^{-1}(z_1,\dots,z_m)=
	(z_1-u_1z_m,\dots,z_j-u_jz_m,\dots,z_{m-1}-u_{m-1}z_m,z_m).
$$
\par
\begin{proposition}\label{changevars}
Let $r,R\in|\mathbf{F^\times}|,$ with $r\le R,$ and let 
$$
	f(z)=\sum_\gamma a_\gamma z^\gamma \in \mathcal{A}^m(R)
$$
be such that $f$ is not identically zero.  Then for 
an $m-1$-tuple $u$ over a generic residue class,
$f\circ\sigma_u$ is $z_m$-distinguished
in $\mathcal{A}^m(r).$
\end{proposition}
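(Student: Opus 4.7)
The plan is to pick the degree $n$ in advance and verify the three bullets of the definition for that $n$. Because $|a_\gamma|r^{|\gamma|}\to 0$, the set $\{\gamma : |a_\gamma|r^{|\gamma|}=|f|_r\}$ is finite and non-empty; I will set $n$ equal to the largest total degree $|\gamma|$ it contains. Write $f\circ\sigma_u = \sum_{j\ge 0} A_j(z_1,\dots,z_{m-1})z_m^j$; the three conditions to verify are that $A_n$ is a unit in $\mathcal{A}^{m-1}(r)$, that $|A_n|_r r^n = |f\circ\sigma_u|_r$, and that $|A_j|_r r^j < |A_n|_r r^n$ for $j>n$.

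The first step is a routine non-Archimedean estimate. Applying $\sigma_u$ to $a_\gamma z^\gamma$ and expanding, every resulting monomial $z_1^{k_1}\cdots z_{m-1}^{k_{m-1}} z_m^{j}$ satisfies $|k|+j=|\gamma|$ and has $\mathbf{F}$-coefficient of absolute value at most $|a_\gamma|$, since $|u_i|\le 1$ and the binomial coefficients lie in the valuation ring of $\mathbf{F}$. Summing over $\gamma$ yields
$$
|A_j|_r r^j \;\le\; \sup_{|\gamma|\ge j}|a_\gamma|r^{|\gamma|}.
$$
For $j>n$ the right side is strictly less than $|f|_r$ by the choice of $n$, which establishes the third bullet. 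Moreover, a $\gamma$ with $|\gamma|=n$ contributes to $A_n$ only through the monomial $z_m^n$ (forcing $|k|=0$), so the non-constant part of $A_n$ receives contributions only from $\gamma$ with $|\gamma|>n$, and the same bound then gives $|A_n - A_n(0,\dots,0)|_r < |f|_r/r^n$.

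The second step is the generic non-vanishing of the constant term of $A_n$. Setting $z_1=\cdots=z_{m-1}=0$ in $f\circ\sigma_u$ produces $\sum_\gamma a_\gamma u_1^{\gamma_1}\cdots u_{m-1}^{\gamma_{m-1}} z_m^{|\gamma|}$, so $A_n(0,\dots,0)$ equals the polynomial $P(u):=\sum_{|\gamma|=n}a_\gamma u_1^{\gamma_1}\cdots u_{m-1}^{\gamma_{m-1}}$ in $u_1,\dots,u_{m-1}$. Picking any $\gamma_0$ with $|\gamma_0|=n$ and $|a_{\gamma_0}|r^n = |f|_r$, every coefficient of $P/a_{\gamma_0}$ lies in the valuation ring and its $\gamma_0$-coefficient is $1$; the reduction of $P/a_{\gamma_0}$ to $\widetilde{\mathbf{F}}[u_1,\dots,u_{m-1}]$ is therefore a non-zero polynomial. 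For $u$ over a generic residue class avoiding the zero locus of this reduction, $|A_n(0,\dots,0)| = |a_{\gamma_0}| = |f|_r/r^n$. Combined with the first-step bound this forces $|A_n|_r r^n = |f|_r = |f\circ\sigma_u|_r$, giving the second bullet; and combined with the strict inequality $|A_n - A_n(0,\dots,0)|_r < |A_n(0,\dots,0)|$, Proposition~\ref{unit} delivers the first bullet. The main obstacle to anticipate is choosing $n$ correctly --- the largest total degree at which $|f|_r$ is attained, not the smallest or largest in the support --- since this single choice is what makes both the strict inequality for $j>n$ and the strict separation inside $A_n$ fall out of the same elementary estimate, while the genericity hypothesis enters only through the non-vanishing of the one polynomial $P(u)/a_{\gamma_0}$ modulo the maximal ideal.
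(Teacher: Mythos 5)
Your proof is correct and takes essentially the same route as the paper: both pick $n$ as the largest total degree at which $|a_\gamma|r^{|\gamma|}=|f|_r$ is attained, bound each $A_j$ by the tail $\sup_{|\gamma|\ge j}|a_\gamma|r^{|\gamma|}$, and use genericity of $u$ only to keep the reduction of $\sum_{|\gamma|=n}a_\gamma u_1^{\gamma_1}\cdots u_{m-1}^{\gamma_{m-1}}$ from vanishing. The one cosmetic difference is that the paper introduces the full graded lexicographical order and takes $\mu$ to be the largest such index, but its argument uses only the total degree $|\mu|$; your choice of a single $n$ together with an arbitrary maximizing $\gamma_0$ of that degree makes this more transparent without changing the substance.
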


\begin{remark*}
I emphasize that because we can choose $u$ over a generic
residue class, 
given any finite collection of functions $f_k$ and given any
finite number of radii $r_\ell\in|\mathbf{F}^\times|$ with
$r_\ell\le R,$ we can find an automorphism $\sigma_u$ so that
the $f_k\circ\sigma_u$ are all simultaneously $z_m$-distinguished
in each of the rings $\mathcal{A}^m(r_\ell).$ In fact, we can do
this simultaneously for all $r\le R,$ but we will not need that.
\end{remark*}

\begin{proof}
Write
$$
	f\circ\sigma_u(z_1,\dots,z_m)=\sum_{j=0}^\infty
	B_j z_m^j.
$$
Each $B_j$ is a power-series with integer coefficients in the
$a_\gamma,$  in the $u_j,$ and in the variables $z_1,\dots,z_{m-1}.$
Those coefficients $a_\gamma$ which appear in $B_j$ are precisely
those with \hbox{$\gamma=(\gamma_1,\dots,\gamma_m),$} where
$|\gamma|\ge j$ and $\gamma_m\le j.$
Let $\mu$ be the largest multi-index in the graded lexicographical order
such that $|a_\mu|r^{|\mu|}=|f|_r.$

Consider $j>|\mu|.$ In this case, all the coefficients
$a_\gamma$ appearing in $B_j$ are such that
\hbox{$|a_\gamma|r^{|\gamma|} < |f|_r.$}
Thus, for $j>|\mu|,$
$$
	|B_j|_rr^j \le
	\sup_{\gamma}|a_\gamma|r^{|{\gamma}|} < |f|_r,
$$
where the $\sup$ is taken over those $\gamma$ 
with $a_\gamma$ appearing in $B_j,$ all of which have graded lexicographical
order greater than $\mu.$

For $j=|\mu|,$ note that any term appearing in $B_j$ that involves
any of the variables $z_1,\dots,z_{m-1}$ will include a coefficient
$a_\gamma$ with $\gamma$ greater that $\mu$ in the 
graded lexicographical ordering,
and thus will have 
$$
	|a_\gamma| r^{|\gamma|} < |a_\mu| r^{|\mu|}.
$$
On the other hand, one of the constant terms appearing in $B_j$ is
$$
	a_\mu u_1^{\mu_1}\cdots u_{m-1}^{\mu_{m-1}}.
$$
Thus, keeping in mind we are considering $j=|\mu|,$
$$
	|B_j|_r r^j = |a_\mu| r^{|\mu|} = |f|_r,
$$
provided none of the other constant terms in $B_j$ reduce the norm of
$B_j,$ and this is true for $u$ over a generic residue class.
Also, because the 
norm of the constant term in $B_j$ dominates all the norms of the variable
terms, $B_j$ is a unit in $\mathcal{A}^{m-1}(r)$ by Proposition~\ref{unit}.
Note that here we only use the implication in Propostion~\ref{unit} that
we have already proven.

For $j<\mu,$ we have
$$
	|B_j|_rr^j \le
	\sup_{\gamma}|a_\gamma|r^{|{\gamma}|} 
	\le |a_\mu|r^{\mu}=|f|_r,
$$
where again the $\sup$ is taken over those $\gamma$ appearing
in $B_j.$  Hence, we conclude that, for  $u$ over a generic
residue class, $|f\circ\sigma_u|_r = |f|_r$
and that $f\circ\sigma_u$ is $z_m$-distinguished in
$\mathcal{A}^m(r).$
\end{proof}

\begin{proof}[Completion of the proof of Propostion~\ref{unit}.]
Recall that we are in the situation where
$$
   u(z)=1+\sum_{|\gamma|\ge1}a_\gamma z^\gamma.
$$
We need to show that if
\begin{equation}\label{supgeone}
     \sup_{|\gamma|\ge1} |a_\gamma|r^{|\gamma|}\ge1,
\end{equation}
then $u$ is not a unit. By Proposition~\ref{changevars},
we may assume that $u$ is $z_m$-distinguished,
and of positive degree by~(\ref{supgeone}).
Theorem~\ref{wprep} then says that we can write $u=vW,$
where $v$ is a unit and $W$ is a Weierstrass polynomial of positive
degree. Propostion~\ref{Wnotunit} then implies that $u$ is not a unit.
\end{proof}

\begin{theorem}[{\cite[Th.~5.2.6/1]{BGR}}]\label{ufd}
The ring $\mathcal{A}^m(r)$ is factorial.
\end{theorem}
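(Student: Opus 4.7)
The plan is to proceed by induction on $m$. The base case $m=0$ is trivial, since $\mathcal{A}^0(r)=\mathbf{F}$ is a field. For the inductive step, assume $\mathcal{A}^{m-1}(r)$ is factorial; then by Gauss's lemma the polynomial ring $\mathcal{A}^{m-1}(r)[z_m]$ is factorial as well, and the strategy is to transfer factorizations back and forth between $\mathcal{A}^{m-1}(r)[z_m]$ and $\mathcal{A}^m(r)$ by means of the Weierstrass Preparation Theorem.

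For existence of a factorization, given a non-zero non-unit $f\in\mathcal{A}^m(r),$ Proposition~\ref{changevars} produces a linear automorphism $\sigma_u$ after which $f\circ\sigma_u$ is $z_m$-distinguished. Theorem~\ref{wprep} then writes $f\circ\sigma_u=vW$ with $v$ a unit in $\mathcal{A}^m(r)$ and $W$ a Weierstrass polynomial. Factoring $W$ into irreducibles in $\mathcal{A}^{m-1}(r)[z_m]$ and iterating Proposition~\ref{wpfactor} to absorb leading coefficients, we arrange $W=W_1\cdots W_k$ where each $W_j$ is a Weierstrass polynomial irreducible in $\mathcal{A}^{m-1}(r)[z_m];$ pulling back by $\sigma_u^{-1}$ gives a corresponding factorization of $f$ in $\mathcal{A}^m(r).$

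The main obstacle is the transfer of irreducibility: one must show that such a $W_j,$ irreducible in $\mathcal{A}^{m-1}(r)[z_m],$ remains irreducible in the larger ring $\mathcal{A}^m(r).$ The cleanest route is via the Weierstrass Division Theorem, which accompanies Theorem~\ref{wprep} in \cite{BGR}: division by a Weierstrass polynomial $W_j$ induces a ring isomorphism $\mathcal{A}^m(r)/(W_j)\cong\mathcal{A}^{m-1}(r)[z_m]/(W_j),$ so primality of $W_j$ in one ring is equivalent to primality in the other. (Note that the converse transfer, ``reducible in $\mathcal{A}^{m-1}(r)[z_m]$ implies reducible in $\mathcal{A}^m(r),$'' is an immediate consequence of Proposition~\ref{wpfactor} together with Proposition~\ref{Wnotunit}.)

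For uniqueness, one first observes that any factorization of $f$ in $\mathcal{A}^m(r)$ can be normalized into the form above: applying the remark after Proposition~\ref{changevars} simultaneously to all of the factors, and then invoking Theorem~\ref{wprep}, replaces each factor by a unit of $\mathcal{A}^m(r)$ times a Weierstrass polynomial in $\mathcal{A}^{m-1}(r)[z_m].$ The uniqueness clause of Theorem~\ref{wprep} then forces the product of these Weierstrass polynomials to equal $W,$ and uniqueness of factorization in the factorial ring $\mathcal{A}^{m-1}(r)[z_m]$ completes the argument.
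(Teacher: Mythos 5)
The paper offers no proof of this theorem: it simply cites \cite[Th.~5.2.6/1]{BGR} and remarks that the result is due to Salmon. Your argument is correct, and it is essentially the standard proof that appears in \cite{BGR}: induct on $m,$ reduce via the linear change of variables of Proposition~\ref{changevars} and Theorem~\ref{wprep} to Weierstrass polynomials, obtain factoriality of $\mathcal{A}^{m-1}(r)[z_m]$ from Gauss's lemma, and then transfer irreducibility between $\mathcal{A}^{m-1}(r)[z_m]$ and $\mathcal{A}^m(r)$ using the Weierstrass Division isomorphism $\mathcal{A}^m(r)/(W)\cong\mathcal{A}^{m-1}(r)[z_m]/(W).$ Two small remarks. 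First, once the division isomorphism shows that each irreducible Weierstrass factor $W_j$ is actually \emph{prime} in $\mathcal{A}^m(r),$ you are done: an integral domain in which every non-zero non-unit is a finite product of primes is automatically factorial, so your separate uniqueness paragraph is logically redundant (though not incorrect). Second, if you do want to argue uniqueness as you did, note that to invoke the uniqueness clause of Theorem~\ref{wprep} for $f\circ\sigma_u$ you must know $f\circ\sigma_u$ is itself $z_m$-distinguished; this is arranged either by including $f$ among the finitely many functions made simultaneously distinguished, or by verifying directly that a unit times a Weierstrass polynomial of degree $n$ is $z_m$-distinguished of degree $n.$
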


\begin{remark*} This was proven by Salmon in \cite{Salmon}.
\end{remark*}

\begin{proposition}\label{relprime}
Let $r<R$ with $r$ and $R$ in $|\mathbf{F}^\times|.$
Let $f_1$ and $f_2$ be analytic functions in
\hbox{$\mathcal{A}^m(R)\subsetneq\mathcal{A}^m(r).$}
If $f_1$ and $f_2$ are relatively prime in the ring
$\mathcal{A}^m(R),$ they remain relatively prime when considered
as elements of the bigger ring $\mathcal{A}^m(r),$ in other words
when they are restricted to $\mathbf{B}^m(r).$
\end{proposition}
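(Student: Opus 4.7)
My plan is to argue by contradiction: suppose $h\in\mathcal{A}^m(r)$ is a non-unit common factor of $f_1,f_2$, and show this forces the resultant (in the last variable) of the Weierstrass parts of $f_1,f_2$ to vanish, contradicting the fact that $f_1,f_2$ are relatively prime in $\mathcal{A}^m(R)$. First, by Proposition~\ref{changevars} and its remark---applied to $f_1,f_2\in\mathcal{A}^m(R)$ with both radii taken to be $R$, and separately to $h\in\mathcal{A}^m(r)$ with both radii taken to be $r$, then intersecting the resulting generic-residue-class conditions---I may choose a single $\sigma_u$ so that, after replacing $f_j$ by $f_j\circ\sigma_u$ and $h$ by $h\circ\sigma_u$, both $f_1$ and $f_2$ are $z_m$-distinguished in $\mathcal{A}^m(R)$ and $h$ is $z_m$-distinguished in $\mathcal{A}^m(r)$. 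Theorem~\ref{wprep} then gives $f_j=u_jW_j$ in $\mathcal{A}^m(R)$ with $W_j\in\mathcal{A}^{m-1}(R)[z_m]$ Weierstrass, and $h=u_hW_h$ in $\mathcal{A}^m(r)$ with $W_h$ a Weierstrass polynomial of positive degree $n_h\ge1$; positivity holds because a $z_m$-distinguished function of degree zero is a unit by Proposition~\ref{unit}, while $h$ is assumed to be a non-unit.

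The algebraic heart of the argument is that $W_1,W_2$ are relatively prime not only in $\mathcal{A}^m(R)$ but already in the polynomial ring $\mathcal{A}^{m-1}(R)[z_m]$. A common divisor of $z_m$-degree zero sits in $\mathcal{A}^{m-1}(R)$ and divides the monic $W_j$, hence is a unit by comparison of leading coefficients; a common divisor of positive $z_m$-degree can, by Proposition~\ref{wpfactor}, be rescaled by a unit of $\mathcal{A}^{m-1}(R)$ into a Weierstrass polynomial of positive degree, which by Proposition~\ref{Wnotunit} would be a non-unit common divisor of $W_1,W_2$ in $\mathcal{A}^m(R)$, and therefore of $f_1,f_2$, contradicting the hypothesis. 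Because $\mathcal{A}^{m-1}(R)[z_m]$ is a UFD (from Theorem~\ref{ufd}) and $W_1,W_2$ are monic, it follows that the resultant $\rho:=\mathrm{Res}_{z_m}(W_1,W_2)\in\mathcal{A}^{m-1}(R)$ is nonzero, and the Sylvester-type identity provides $P,Q\in\mathcal{A}^{m-1}(R)[z_m]$ with $PW_1+QW_2=\rho$.

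To finish, since $u_j\in\mathcal{A}^m(R)^\times\subset\mathcal{A}^m(r)^\times$ and $h$ divides $f_j=u_jW_j$ in $\mathcal{A}^m(r)$, the Weierstrass polynomial $W_h$ divides each $W_j$ in $\mathcal{A}^m(r)$, and therefore also divides $\rho$; write $\rho=W_h\cdot g$ with $g\in\mathcal{A}^m(r)$. For any chosen $(z_1',\dots,z_{m-1}')\in\mathbf{B}^{m-1}(r)$, the monic one-variable polynomial $W_h(z_1',\dots,z_{m-1}',z_m)$ of degree $n_h\ge1$ has, by exactly the root-estimate given in the proof of Proposition~\ref{Wnotunit}, a root $z_m'$ with $|z_m'|\le r$; evaluating the equation $\rho=W_h g$ at $(z_1',\dots,z_{m-1}',z_m')$ then forces $\rho(z_1',\dots,z_{m-1}')=0$. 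Thus $\rho$ vanishes at every point of $\mathbf{B}^{m-1}(r)$, and Proposition~\ref{maxmod} yields $|\rho|_r=0$, whence $\rho=0$, contradicting $\rho\ne0$. I expect the most delicate step to be the transfer of relative primality from the analytic ring $\mathcal{A}^m(R)$ into the polynomial subring $\mathcal{A}^{m-1}(R)[z_m]$, where Propositions~\ref{wpfactor} and~\ref{Wnotunit} have to be combined carefully.
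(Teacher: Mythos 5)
Your proof is correct and follows the same strategy as the paper: reduce by a linear change of variable to Weierstrass polynomials, transfer relative primeness from $\mathcal{A}^m(R)$ down into the polynomial ring $\mathcal{A}^{m-1}(R)[z_m]$ (exactly via Propositions~\ref{wpfactor} and~\ref{Wnotunit}, as the paper does), produce a nonzero element of $\mathcal{A}^{m-1}(R)$ lying in the ideal $(f_1,f_2)$, and show that a positive-degree Weierstrass common factor in $\mathcal{A}^m(r)$ would have to divide that element, which is impossible. The two places you diverge from the paper are minor but worth noting. First, where the paper invokes Gauss's Lemma together with the fact that $\mathcal{M}^{m-1}(R)[z_m]$ is a PID, writes $1=G_1W_1+G_2W_2$, and clears denominators to obtain some nonzero $h\in\mathcal{A}^{m-1}(R)$ with $h\in(W_1,W_2)$, you instead take $\rho=\operatorname{Res}_{z_m}(W_1,W_2)$ and use the Sylvester identity; this is the same idea, just with a canonical choice of $h$. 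Second, for the final contradiction the paper asserts tersely that a Weierstrass polynomial of positive degree dividing an element of $\mathcal{A}^{m-1}(r)$ must have degree zero (which implicitly leans on a degree comparison, really a consequence of Weierstrass division), whereas you make this concrete by evaluating: for each $(z_1',\dots,z_{m-1}')\in\mathbf{B}^{m-1}(r)$ the polynomial $W_h(z_1',\dots,z_{m-1}',\cdot)$ has a root of modulus at most $r$, forcing $\rho$ to vanish on all of $\mathbf{B}^{m-1}(r)$ and hence, by Proposition~\ref{maxmod}, to be identically zero. Your version of that last step is more self-contained and arguably clearer for the intended student audience.
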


\begin{proof}
This is a standard argument.  See, for instance \cite[Prop.~6.4.11]{Krantz},
where the analogous result for germs of analytic functions on a domain
in $\mathbf{C}^m$ is proven.

We can multiply by units and make changes of variables without changing
the question as to whether two functions are relatively prime.
Hence, using Proposition~\ref{changevars} (and the remark
following it) and Theorem~\ref{wprep},
we may assume without loss of generality that $f_1$ and $f_2$
are Weierstrass polynomials relatively prime in $\mathcal{A}^m(R),$
and that they are $z_m$-distinguished in $\mathcal{A}^m(r).$

We now claim that $f_1$ and $f_2$ are relatively prime in 
$\mathcal{A}^{m-1}(R)[z_m].$  The novice reader should think about
why this is not an entirely trivial statement because although
$\mathcal{A}^{m-1}(R)[z_m]$ is a smaller ring than $\mathcal{A}^m(R),$
there are also fewer units. Indeed, suppose there is a 
non-trivial common factor $h$
and functions $g_1$ and $g_2$ in $\mathcal{A}^{m-1}(R)[z_m]$ such that
$f_j=hg_j.$  Then by Proposition~\ref{wpfactor}, $h,$ $g_1$ and $g_2$
are also Weierstrass polynomials, up to units. By assumption $h$ is
not a unit in $\mathcal{A}^{m-1}(R)[z_m],$ and hence is not of degree 0.
Because, up to a unit,
$h$ is a Weierstrass polynomial, this means $h$ is also not
a unit in $\mathcal{A}^m(R)$ contradicting our original assumption that
$f_1$ and $f_2$ are relatively prime in $\mathcal{A}^m(R).$

Now, by Gauss's Lemma, $f_1$ and $f_2$ are relatively prime
in $\mathcal{M}^{m-1}(R)[z_m].$  This is important, because
$\mathcal{M}^{m-1}(R)[z_m],$ being a one-variable polynomial ring
over a field, is a principal ideal domain.  Hence, there exist
$G_1$ and $G_2$ in $\mathcal{M}^{m-1}(R)[z_m]$ such that
$$
	1=G_1 f_1 + G_2 f_2.
$$
Clearing denominators, we find functions $h,$ $g_1$ and $g_2$
in $\mathcal{A}^{m-1}(R)$ such that
$$
	h=g_1f_1+g_2f_2.
$$

Finally, suppose that $f_1$ and $f_2$ are not relatively prime in
$\mathcal{A}^m(r).$ Then, there is a non-trivial common factor $\bar f$
of $f_1$ and $f_2$ in $\mathcal{A}^m(r).$  Since $f_1$ 
is $z_m$-distinguished in $\mathcal{A}^m(r),$ we know by
Theorem~\ref{wprep} that it is a unit times a Weierstrass polynomial
in $\mathcal{A}^{m-1}(r)[z_m].$  Because $\bar f$ is a factor of $f_1,$
we can then use Proposition~\ref{wpfactor} to conclude that
$\bar f$ is a unit times a Weierstrass polynomial.  Thus, we might as
well assume $\bar f$ is a Weierstrass polynomial.  But $\bar f,$
being a common factor of $f_1$ and $f_2,$ divides $h,$ which does not
depend on $z_m.$  Hence $\bar f$ has degree zero as a Weierstrass polynomial,
and is therefore a unit in $\mathcal{A}^{m-1}(r).$
\end{proof}

\begin{corollary}\label{gcd}
Let $r<R$ with $r$ and $R$ in $|\mathbf{F}^\times|.$
Let $f_1,\dots,f_k$ be analytic functions in
\hbox{$\mathcal{A}^m(R)\subsetneq\mathcal{A}^m(r).$}
If $G$ is a greatest common divisor of the $f_j$
in $\mathcal{A}^m(R)$ and if $g$ is a greatest common divisor
of the $f_j$ in $\mathcal{A}^m(r).$  Then considering $g$ and $G$
as elements of $\mathcal{A}^m(r),$ they differ by a unit in
$\mathcal{A}^m(r).$
\end{corollary}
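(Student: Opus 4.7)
The plan is to reduce everything to the two-function case and then invoke Proposition~\ref{relprime}, proceeding by induction on $k$. The key algebraic fact I will exploit is that in the UFD $\mathcal{A}^m(r)$ (Theorem~\ref{ufd}) the gcd is associative up to units, so that $\gcd(f_1,\dots,f_k)$ may be computed as $\gcd(\gcd(f_1,\dots,f_{k-1}),f_k),$ and the same formula holds in $\mathcal{A}^m(R).$ This lets a statement about $k$-tuples be shuttled into a statement about pairs, which is exactly what Proposition~\ref{relprime} is designed to handle.

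For the base case $k=2,$ I would factor $f_j = G F_j$ in $\mathcal{A}^m(R);$ by the defining property of the gcd, $F_1$ and $F_2$ are relatively prime in $\mathcal{A}^m(R),$ and Proposition~\ref{relprime} then ensures that they remain relatively prime in $\mathcal{A}^m(r).$ Because $G$ is a common divisor of $f_1,f_2$ in $\mathcal{A}^m(r)$ and $g$ is a gcd there, $G$ divides $g,$ so write $g=Gh$ with $h\in\mathcal{A}^m(r).$ Writing also $f_j=gs_j$ in $\mathcal{A}^m(r)$ and cancelling $G$ (valid since $\mathcal{A}^m(r)$ is a domain) yields $F_j=hs_j,$ so $h$ is a common divisor of the coprime pair $F_1,F_2$ in $\mathcal{A}^m(r).$ Hence $h$ must be a unit, which is exactly the conclusion we want.

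For the inductive step with $k\ge 3,$ let $G'$ and $g'$ denote the gcd's of $f_1,\dots,f_{k-1}$ in $\mathcal{A}^m(R)$ and $\mathcal{A}^m(r)$ respectively. The inductive hypothesis gives $G'=ug'$ for some unit $u\in\mathcal{A}^m(r),$ so $g=\gcd(g',f_k)=\gcd(G',f_k)$ in $\mathcal{A}^m(r)$ (a unit does not change gcd), while $G=\gcd(G',f_k)$ in $\mathcal{A}^m(R).$ Applying the already-proved base case to the single pair $(G',f_k)$ then finishes the argument. The main obstacle here is simply that Proposition~\ref{relprime} is stated only for pairs whereas the corollary concerns a family of $k$ functions; recognising that the associativity of the gcd funnels the $k$-fold statement into a cascade of pair statements is what makes the short proof go through.
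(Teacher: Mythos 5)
Your argument is correct and is essentially the paper's own proof: both reduce to $k=2$ by induction, both observe that $G$ divides $g$ and that $f_1/G,\ f_2/G$ are relatively prime in $\mathcal{A}^m(R)$, and both invoke Proposition~\ref{relprime} to conclude they stay relatively prime in $\mathcal{A}^m(r)$, forcing $g$ and $G$ to be associates. You have simply written out in full detail (the factor $h$, the cancellation of $G$, the explicit induction step via associativity of gcd) what the paper compresses into ``Clearly $G$ divides $g$ \dots Hence $g$ divides $G$.''
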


\begin{proof}
By induction, we need only consider the case $k=2.$
Clearly $G$ divides $g.$
By assumption $f_1/G$ and $f_2/G$ are relatively prime
in $\mathcal{A}^{m}(R).$ By the proposition they remain relatively
prime in $\mathcal{A}^m(r).$ Hence $g$ divides $G.$
\end{proof}

Let $r<R$ with both $r$ and $R$ in $|\mathbf{F}^\times|.$
Let $P$ be an irreducible element in $\mathcal{A}^m(R).$
If we restrict $P$ to an element of $\mathcal{A}^m(r),$ one of
three things can happen: $P$ may remain irreducible, $P$ may become
a unit, or $P$ may become reducible. As an example of the second
case, consider $P(z)=1-z$ in one variable. If $r<1<R,$
then $P$ is irreducible in $\mathcal{A}^1(R)$ but a unit in
$\mathcal{A}^1(r).$ The third possibility is strictly a several variable
phenomenon. For example, consider $P(z_1,z_2)=z_2^2-z_1^2(1-z_1).$
Then, $P$ is irreducible for $R$ large. However, for $r<1,$ we can find
an analytic branch of $\sqrt{1-z_1},$ and hence $P$ factors as
$$
	P(z_1,z_2)=(z_2-z_1\sqrt{1-z_2})(z_2+z_1\sqrt{1-z_1}).
$$
However, we do have the following useful corollary.

\begin{corollary}\label{irreducible}
Let $r<R$ be in $|\mathbf{F}^\times|.$ Let $f$ be an element
of $\mathcal{A}^m(R).$
Let $q$ be an irreducible factor of $f$ 
in $\mathcal{A}^m(r).$ Then, up to multiplication by unit in
$\mathcal{A}^m(R),$
there exists a unique irreducible factor
$Q$ of $f$ in $\mathcal{A}^m(R)$ such that $q$ divides $Q$ in
$\mathcal{A}^m(r).$ Moreover, $q$ divides $Q$ with exact multiplicity $1,$
and $Q$ divides $f$ in $\mathcal{A}^m(R)$ with the same exact multiplicity
with which $q$ divides $f$ in $\mathcal{A}^m(r).$
\end{corollary}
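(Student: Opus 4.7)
The plan has two main parts: existence and uniqueness of $Q$ from the UFD structure of $\mathcal{A}^m(R)$, and the multiplicity-one assertion via a derivative argument. For existence, factor $f = c\,Q_1^{E_1}\cdots Q_N^{E_N}$ in $\mathcal{A}^m(R)$ using Theorem~\ref{ufd}; since $q$ is prime in the UFD $\mathcal{A}^m(r)$ and divides $f$, it must divide some $Q_i$, which I take to be $Q$. For uniqueness, if $q$ also divided some $Q_j$ with $j\ne i$, then $Q_i$ and $Q_j$ would be coprime in $\mathcal{A}^m(R)$ (distinct irreducibles in a UFD) yet share $q$ as a common factor in $\mathcal{A}^m(r)$, contradicting Proposition~\ref{relprime}.

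For multiplicity one, I would use Proposition~\ref{changevars} and Theorem~\ref{wprep} to reduce, after a suitable change of variables $\sigma_u$ and replacing $Q$ by its Weierstrass associate, to the case where $Q$ is a Weierstrass polynomial in $\mathcal{A}^{m-1}(R)[z_m]$ of positive $z_m$-degree $n$. I further want $\partial_{z_m} Q\ne 0$: this is automatic in characteristic zero, and in positive characteristic it holds because an irreducible $Q$ over the perfect field $\mathbf{F}$ cannot be a $p$-th power, so some partial $\partial_{z_i} Q$ is nonzero, and then for generic $u$ the directional derivative $\partial_{z_m}(Q\circ\sigma_u)=(\partial_{z_m} Q+\sum u_i\partial_{z_i} Q)\circ\sigma_u$ is also nonzero. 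All genericity conditions can be imposed on $u$ simultaneously.

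With this setup, $\partial_{z_m} Q$ is a polynomial in $z_m$ of degree strictly less than $n$, so $Q$ does not divide $\partial_{z_m} Q$ in $\mathcal{A}^{m-1}(R)[z_m]$; Weierstrass division (a standard consequence of Theorem~\ref{wprep}) promotes this to non-divisibility in $\mathcal{A}^m(R)$, and the irreducibility of $Q$ there gives $\gcd(Q,\partial_{z_m} Q)=1$ in $\mathcal{A}^m(R)$. By Corollary~\ref{gcd} this coprimality persists in $\mathcal{A}^m(r)$. If $q^2\mid Q$ in $\mathcal{A}^m(r)$, then writing $Q=q^2 h$ and applying the product rule yields $q\mid \partial_{z_m} Q$, contradicting the coprimality, so $q$ divides $Q$ with multiplicity exactly $1$. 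The equality of multiplicities in $f$ is then immediate: writing $f = c\,Q^E\prod_{i\ne 1}Q_i^{E_i}$ in $\mathcal{A}^m(R)$, uniqueness forces $q\nmid Q_i$ in $\mathcal{A}^m(r)$ for $i\ne 1$, so the $q$-adic multiplicity of $f$ in $\mathcal{A}^m(r)$ equals $E$, matching the $Q$-adic multiplicity of $f$ in $\mathcal{A}^m(R)$.

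The main obstacle I anticipate is the positive-characteristic bookkeeping: arranging $\sigma_u$ so that $Q\circ\sigma_u$ is simultaneously $z_m$-distinguished in the relevant rings and has nonzero $z_m$-derivative, and rigorously transferring non-divisibility from the polynomial ring $\mathcal{A}^{m-1}(R)[z_m]$ to the larger analytic ring $\mathcal{A}^m(R)$ via Weierstrass division.
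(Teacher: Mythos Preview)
Your proof is correct and follows essentially the same route as the paper: existence and uniqueness of $Q$ via the UFD factorization of $f$ in $\mathcal{A}^m(R)$ together with Proposition~\ref{relprime}, and multiplicity one via the observation that an irreducible $Q$ is coprime to a nonvanishing partial derivative, with that coprimality persisting in $\mathcal{A}^m(r)$ so that $q^2\mid Q$ would force $q\mid\partial Q$. The only difference is cosmetic: the paper simply picks any index $j$ with $\partial P_1/\partial z_j\not\equiv 0$ and invokes coprimality directly from irreducibility, without any coordinate change, whereas you first reduce to a Weierstrass polynomial and argue via $z_m$-degree---extra bookkeeping that makes one implicit step explicit but is not needed for the argument as the paper presents it.
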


\begin{proof} 
Using Theorem~\ref{ufd}, write $f=p_1^{d_1}\cdots p_s^{d_s}$
in $\mathcal{A}^m(r)$ and $f=P_1^{e_1}\cdots P_t^{e_t}$ in
$\mathcal{A}^m(R),$ with the $p_i$ and the $P_j$ distinct
irreducible elements. Without loss of generality, assume $q=p_1.$

I will first show that $q$ divides at most one $P_j$ 
in $\mathcal{A}^m(r).$ Since $P_j$ and $P_k$ are irreducible in
$\mathcal{A}^m(R),$ they are relatively prime in $\mathcal{A}^m(R).$
If $q$ were to divide $P_j$ and $P_k$ with
$k\ne j,$ then $P_j$ and $P_k$ would not be relatively prime in
$\mathcal{A}^m(r),$  which would contradict Proposition~\ref{relprime}.

Since $q$ is irreducible in $\mathcal{A}^m(r)$ and divides
$f=P_1^{e_1}\cdots P_t^{e_t},$ it must clearly divide one of the $P_j,$
which, without loss of generality, we will assume is $P_1.$

It remains to check that $q$ divides $P_1$ with multiplicity $1,$ as this
will then imply that $d_1=e_1.$ Because $P_1$ is not a unit and irreducible,
there exists a $j$ with $1\le j \le m$ such that
$\partial P_1/\partial z_j\not\equiv0.$ In characteristic zero, this
follows from the fact that any non-constant analytic function has
at least one partial derivative which does not vanish identically.
In positive characteristic $p,$ if all the partial derivatives vanish
identically, then the analytic function is a pure $p$-th power, and hence
not irreducible. Since $P_1$ is irreducible, it must be relatively
prime to $\partial P_1/\partial z_j.$ Again, by Propositon~\ref{relprime},
$P_1$ and $\partial P_1/\partial z_j$ remain relatively prime in
$\mathcal{A}^m(r).$ Thus, no irreducible element in $\mathcal{A}^m(r)$
can divide $P_1$ with multiplicity greater than one.
\end{proof}

I now present an argument of L\"utkebohmert \cite{Lu}.

\begin{lemma}{\label{lu}}
For $i=1,2,3,\dots,$ let $r_i$ be an increasing sequence of elements in
$|\mathbf{F}^\times|$ such that $r_i\to\infty.$
Suppose that for each $i,$
we are given analytic functions
$g_i$ in $\mathcal{A}^m(r_i)$ and for each $i<j,$ we are given
units $u_{i,j}$ in $\mathcal{A}^m(r_i)$ such that in $\mathcal{A}^m(r_i)$
we have
$$
	g_i = u_{i,j}g_j.
$$
Then, there exists an entire function $G$ on $\mathbf{F}^m$ and units
$v_i$ in $\mathcal{A}^m(r_i)$ such that $g_i=Gv_i$ in $\mathcal{A}^m(r_i).$
\end{lemma}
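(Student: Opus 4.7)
The plan is to construct units $v_i \in \mathcal{A}^m(r_i)$ satisfying the compatibility $v_i = u_{i,j} v_j$ in $\mathcal{A}^m(r_i)$ for all $i<j$.  Once such a sequence is in hand, $G := g_i/v_i$ is independent of $i$ on each nested ball $\mathbf{B}^m(r_i)$ (combining the cocycle with $g_i = u_{i,j} g_j$), so the compatible local values glue to an entire function $G$ on $\mathbf{F}^m$ with $g_i = G v_i$, as required.

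I would build the $v_i$'s inductively by successive approximation.  Set $v_1^{(1)} := 1$.  Given units $v_1^{(n)}, \ldots, v_n^{(n)}$ at stage $n$ satisfying the cocycle on $\{1,\dots,n\}$, I pick a unit $v_{n+1}^{(n+1)} \in \mathcal{A}^m(r_{n+1})$ whose restriction to $\mathbf{B}^m(r_n)$ approximates the target $w_n := v_n^{(n)}/u_{n,n+1}$ in $|\cdot|_{r_n}$ to prescribed precision, and redefine $v_i^{(n+1)} := u_{i,n+1}\, v_{n+1}^{(n+1)}|_{\mathbf{B}^m(r_i)}$ for $i \le n$.  The cocycle identity $u_{i,n+1} = u_{i,n} u_{n,n+1}$ yields the stage-$(n+1)$ cocycle automatically, and a direct computation shows that the ratio $v_i^{(n+1)}/v_i^{(n)}$ equals the restriction of $v_{n+1}^{(n+1)}/w_n$ to $\mathbf{B}^m(r_i)$.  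Demanding $|v_{n+1}^{(n+1)}/w_n - 1|_{r_n} \le 2^{-n}$ at each stage, and noting that $|\cdot|_{r_i} \le |\cdot|_{r_n}$ for $i \le n$, the sequences $(v_i^{(n)})_{n \ge i}$ become Cauchy in the complete norm $|\cdot|_{r_i}$ and converge to units $v_i \in \mathcal{A}^m(r_i)$ (units being stable under small multiplicative perturbation by Proposition~\ref{unit}); the cocycle identity passes to the limit.

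The crux is the following approximation claim: given a unit $w \in \mathcal{A}^m(r)$ and $\varepsilon > 0$, produce a unit $\tilde v \in \mathcal{A}^m(r')$, with $r' > r$ in $|\mathbf{F}^\times|$, such that $|\tilde v/w - 1|_r < \varepsilon$.  I would write $w = c(1+\phi)$ via Proposition~\ref{unit}, with $c \in \mathbf{F}^\times$ and $|\phi|_r < 1$, and truncate $\phi$ to a polynomial $p \in \mathcal{A}^m(r')$ with $|\phi - p|_r$ as small as desired.  The polynomial $1+p$ may acquire zeros on $\mathbf{B}^m(r') \setminus \mathbf{B}^m(r)$, so I would apply Proposition~\ref{changevars} to render it $z_m$-distinguished in $\mathcal{A}^m(r')$ and then Theorem~\ref{wprep} to factor $1+p = W U$ with $W$ a Weierstrass polynomial and $U$ a unit in $\mathcal{A}^m(r')$; because $1+p$ is close to the unit $1+\phi$ on $\mathbf{B}^m(r)$, all the zeros of $W$ sit strictly outside $\mathbf{B}^m(r)$, so $W$ is itself a unit in $\mathcal{A}^m(r)$.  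The main obstacle I anticipate is that the obvious candidate $cU \in \mathcal{A}^m(r')$ differs from $w$ on $\mathbf{B}^m(r)$ by the factor $W^{-1}|_{\mathbf{B}^m(r)}$, which is not in general close to $1$ for a Weierstrass polynomial of positive degree; the delicate part of L\"utkebohmert's argument is to compensate for this bulk Weierstrass factor, which I would handle by a further approximation of $W|_{\mathbf{B}^m(r)}$ by a unit in $\mathcal{A}^m(r')$ built from the explicit zero locus of $W$, absorbing it multiplicatively into $cU$, and thereby producing the desired $\tilde v$ and, through the iterative scheme described above, the entire function $G$.
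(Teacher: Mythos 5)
Your scheme's crux is the approximation claim: given a unit $w \in \mathcal{A}^m(r)$ and $\varepsilon > 0$, find a unit $\tilde v$ in the \emph{smaller} ring $\mathcal{A}^m(r')$ (here $r' > r$, so fewer functions but also fewer units) with $|\tilde v/w - 1|_r < \varepsilon$. This claim is false, already in one variable. Take $c \in \mathbf{F}$ with $r < |c| < r'$, and let $w = 1 + z/c$; then $|w|_r = 1$ and $w$ is a unit in $\mathcal{A}^1(r)$ by Proposition~\ref{unit}. Suppose $\tilde v = \sum_k b_k z^k$ is a unit in $\mathcal{A}^1(r')$ with $|\tilde v - w|_r < \varepsilon < 1$. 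From the constant term, $|b_0| = 1$, and Proposition~\ref{unit} applied in $\mathcal{A}^1(r')$ gives $|b_1| < |b_0|/r' = 1/r'$. Since $|1/c| = 1/|c| > 1/r'$, the ultrametric inequality forces $|b_1 - 1/c| = 1/|c|$, so $|\tilde v - w|_r \ge |b_1 - 1/c|\,r = r/|c|$. Taking $|c|$ arbitrarily close to $r$ (possible, as $|\mathbf{F}^\times|$ is dense) makes $r/|c|$ arbitrarily close to $1$, so the tolerance $2^{-n}$ your iteration demands is unattainable. There is also a circularity in the last step of your sketch: ``absorbing'' the Weierstrass factor $W$ requires approximating the unit $W|_{\mathbf{B}^m(r)}$ by a unit in $\mathcal{A}^m(r')$, which is exactly the problem you set out to solve; and for $m > 1$ the zero set of $W$ is a hypersurface, not a finite point set, so ``a unit built from the explicit zero locus'' is not meaningful.

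What you are missing is the normalization that makes L\"utkebohmert's argument work without any approximation. Discard the trivial case $g_i \equiv 0$, choose $z_0 \in \mathbf{B}^m(r_1)$ with $g_1(z_0) \ne 0$, and rescale by constants so that $g_i(z_0) = 1$ for every $i$; this forces $u_{i,j}(z_0) = 1$. Expanding $u_{j,j+1}$ about $z_0$, Proposition~\ref{unit} applied in $\mathcal{A}^m(r_j)$ gives $|a_\gamma| r_j^{|\gamma|} < 1$ for $|\gamma| \ge 1$, and hence $|u_{j,j+1} - 1|_{r_i} < r_i/r_j$ for $i \le j$. This geometric decay as $j \to \infty$ lets one define $v_i := \prod_{k=i}^\infty u_{k,k+1}$ as a convergent infinite product of the \emph{given} units, with no new units constructed at all, and the cocycle identity $u_{i,j} = \prod_{k=i}^{j-1} u_{k,k+1}$ then shows $g_i v_i^{-1} = g_j v_j^{-1}$ in $\mathcal{A}^m(r_i)$, so these glue to the entire function $G$.
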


\begin{remark*} Since $g_i=Gv_i,$ we see that for $j\ge i,$
\begin{equation}\label{eqeqn}
	g_iv_i^{-1}=G=g_jv_j^{-1} \qquad\textnormal{in~}
	\mathcal{A}^m(r_i),
\end{equation}
\end{remark*}

\begin{proof}
If one of the $g_i$ is identically zero, then they all are,
and we can clearly take $g\equiv0$ and $v_i\equiv1.$ Thus, 
we may assume that there exists
a point $z_0$ in $\mathbf{B}^m(r_1)$
such that $g_1(z_0)\ne0,$ and hence $g_j(z_0)\ne0$ for
all $j$ since $g_1$ and $g_j$ differ by a unit. Without loss of generality,
we may adjust the $g_i$ by multiplicative constants so that
$g_i(z_0)=1$ for all $i.$ This of course implies that
$u_{i,j}(z_0)=1$ for all $i<j$ too. Now, expand $u_{i,i+1}$ as
a power series about $z_0$ to get
$$
	u_{i,i+1}(z)=1+\sum_{|\gamma|\ge1}a_\gamma(z-z_0)^\gamma,
	\qquad\textnormal{with~}|a_\gamma|r_i^{|\gamma|} < 1
	\textnormal{~for all~}\gamma,
$$
by Proposition~\ref{unit}. Hence, for $j>i,$
$$
	|u_{j,j+1}-1|_{r_i} < \frac{r_i}{r_j}.
$$
Fixing $i$ and letting $j\to\infty,$ we have $r_i/r_j\to0,$
and so we can use an infinite product to define a unit $v_i$
in $\mathcal{A}^m(r_i)$ by
$$
	v_i=\prod_{k=i}^\infty u_{k,k+1}.
$$
For $j>i,$ note that
\begin{align*}
	g_jv_i&=g_j\prod_{k=i}^\infty u_{k,k+1}\\
	&=g_j\left(\prod_{k=1}^{j-1}u_{k,k+1}\right)
	\left(\prod_{k=j}^\infty u_{k,k+1}\right) = g_iv_j.
\end{align*}
Therefore, for all $i\le j,$ we have
$$
	g_iv_i^{-1}=g_jv_j^{-1} \qquad\textnormal{in~}
	\mathcal{A}^m(r_i),
$$
which is precisely~(\ref{eqeqn}) and which also means that the
$g_iv_i^{-1}$ converge to an entire function $G$ such that
$G=g_iv_i^{-1}$ in $\mathcal{A}^m(r_i).$
\end{proof}

Now using L\"utkebohmert's argument as
presented in the appendix to \cite{CY}, we get the key result
of this note and what was needed in \cite{CY}.

\begin{theorem}\label{entire}
Greatest common divisors exist in the ring
of entire functions on $\mathbf{F}^m.$ Moreover, if $G$ is the
greatest common divisor of the entire functions $f_1,\dots,f_k$
in the ring of entire functions, then $G$ is also the greatest common
divisor of $f_1,\dots,f_k$ in the ring $\mathcal{A}^m(r)$ for all
$r\in|\mathbf{F}^\times|.$
\end{theorem}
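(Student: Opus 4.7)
The plan is to construct the entire GCD by patching GCDs on closed balls of increasing radii, with Lemma \ref{lu} providing the crucial gluing step. The preparatory tools are already in place: Theorem \ref{ufd} supplies a GCD on each $\mathcal{A}^m(r_i)$, Corollary \ref{gcd} guarantees these local GCDs are compatible up to units as the radius grows, and Lemma \ref{lu} then promotes this compatible system to an entire function.

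In detail, I would first dispose of the trivial case in which every $f_j$ vanishes identically by taking $G = 0$. Otherwise, fix an increasing sequence $r_i \to \infty$ in $|\mathbf{F}^\times|$. Theorem \ref{ufd} gives a GCD $g_i$ of $f_1,\dots,f_k$ in $\mathcal{A}^m(r_i)$, and for $i<j$, Corollary \ref{gcd} (applied to $g_j$ restricted to $\mathcal{A}^m(r_i)$) yields a unit $u_{i,j} \in \mathcal{A}^m(r_i)$ with $g_i = u_{i,j}\, g_j$. This is exactly the hypothesis of Lemma \ref{lu}, which produces an entire function $G$ and units $v_i \in \mathcal{A}^m(r_i)$ such that $g_i = G v_i$.

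Next I would verify that $G$ is a GCD in the ring of entire functions. For divisibility, since $g_i \mid f_j$ in $\mathcal{A}^m(r_i)$, the ratio $f_j/G = v_i (f_j/g_i)$ lies in $\mathcal{A}^m(r_i)$ for every $i$; these restrictions agree by uniqueness of division in the meromorphic function field, so they assemble to an entire function, giving $G \mid f_j$. For maximality, if an entire function $H$ divides every $f_j$, then $H \mid g_i$ in each $\mathcal{A}^m(r_i)$, so $G/H = v_i^{-1}(g_i/H) \in \mathcal{A}^m(r_i)$ for all $i$; thus $G/H$ is entire and $H \mid G$.

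Finally, for the \emph{moreover} clause, given $r \in |\mathbf{F}^\times|$, pick $r_i > r$; then $G = g_i v_i^{-1}$ in $\mathcal{A}^m(r_i)$ differs from the GCD $g_i$ by a unit and is therefore itself a GCD in $\mathcal{A}^m(r_i)$. Applying Corollary \ref{gcd} with $R = r_i$ shows that any GCD in $\mathcal{A}^m(r)$ must differ from $G$ by a unit in $\mathcal{A}^m(r)$, so $G$ serves as a GCD in $\mathcal{A}^m(r)$ as well. Since the serious content is concentrated in Lemma \ref{lu}, I do not foresee a real obstacle; the only point requiring care is ensuring that the various ratios $f_j/G$ and $G/H$ are identified coherently across different radii, which is automatic from uniqueness of division in the ambient meromorphic function fields.
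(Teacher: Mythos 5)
Your proposal is correct and takes essentially the same route as the paper: reduce to local GCDs $g_i$ in the factorial rings $\mathcal{A}^m(r_i)$, establish compatibility via Corollary~\ref{gcd}, glue with Lemma~\ref{lu}, and then check directly that the resulting $G$ divides each $f_j$ and is divisible by any common entire divisor. The only cosmetic difference is that the paper reduces to $k=2$ and, in verifying that $G$ is a common divisor, explicitly builds the cofactor as the limit of the sequence $h_i v_i$ using the compatibility equation (\ref{eqeqn}), whereas you phrase the same thing as the single meromorphic function $f_j/G$ being analytic on every ball; these are the same argument.
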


\begin{proof}
It suffices to prove the theorem when $k=2.$

Let $f_1$ and $f_2$ be two entire functions
on $\mathbf{F}^m.$ If $f_1$ is identically zero,
then clearly $f_2$ is a greatest common divisor of $f_1$ and $f_2.$
Thus, we now assume $f_1$ is not identically zero.

Let $r_i\in|\mathbf{F}|$ for $i=1,2,3,\dots$ be an increasing sequence with
$r_i\to\infty.$ Of course $f_1$ and $f_2$ are also elements
of each of the factorial rings $\mathcal{A}^m(r_i).$  Hence,
for each $i,$
there exist analytic functions $g_i$ in $\mathcal{A}^m(r_i)$
such that $g_i$ is a greatest common divisor of $f_1$ and $f_2$
in $\mathcal{A}^m(r_i).$ For any $i<j,$ by Corollary~\ref{gcd},
there exists a unit $u_{i,j}$ in $\mathcal{A}^m(r_i)$ such that
$$
	g_i=u_{i,j}g_j
$$
in $\mathcal{A}^m(r_i).$ Now, let $v_i$ and $G$ be as
in Lemma~\ref{lu}. Since $g_i=Gv_i,$ we see that $G$ and $g_i,$
differing by a unit, are both greatest common divisors of
$f_1$ and $f_2$ in $\mathcal{A}^m(r_i).$ By Corollary~\ref{gcd},
this also implies that $G$ is the greatest common divisor of $f_1$ and
$f_2$ in $\mathcal{A}^m(r)$ for all $r$ in $|\mathbf{F}^\times|$ with
$r\le r_i.$

It remains to show that $G$ is a greatest common divisor for
$f_1$ and $f_2$ in the ring of entire functions on $\mathbf{F}^m.$
We first check that $G$ divides $f_1.$ Since $g_i$ is a factor of
$f_1$ in $\mathcal{A}^m(r_i),$ there exist analytic functions
$h_i$ in $\mathcal{A}^m(r_i)$ such that $f_1=g_ih_i.$
By (\ref{eqeqn}), $h_iv_i$ converge to an entire function $H$
such that $f_1=GH,$ and hence $G$ is a factor of $f_1.$
Similarly, $G$ is a factor of $f_2,$ and so $G$ is a common factor.

Now let $g$ be any other entire common factor of $f_1$ and $f_2.$
Because $g_i$ is a greatest common factor in $\mathcal{A}^m(r_i),$
there exist analytic functions $\omega_i$ such that
\hbox{$g_i=g\omega_i$} in
$\mathcal{A}^m(r_i).$ Thus, $g_iv_i^{-1}=g\omega_iv_i^{-1}$ in
$\mathcal{A}^m(r_i).$ Because $\mathcal{A}^m(r_i)$
is an integral domain, equation~(\ref{eqeqn}) implies that
if $i\le j$, then $\omega_iv_i^{-1}=\omega_jv_j^{-1}$
in $\mathcal{A}^m(r_i),$
and so $\omega_iv_i^{-1}$ converges to an entire function
$\Omega$ on
$\mathbf{F}^m$ such that $G=g\Omega.$
\end{proof}

A ring is factorial if each element in the ring can be uniquely
written, up to a permuation and multiplication by units,
as a finite product of irreducible elements.
Although the ring of entire functions on $\mathbf{F}^m$
is plainly not factorial, I will conclude this note by showing
that it is almost as good as factorial. Namely, any entire function
can be written as a (possibly infinite) product of irreducible entire
functions, and the irreducible factors and multiplicities 
in the product are unique, up to permutation and multiplication by units.

\begin{theorem}\label{almostufd}
Let $f$ be a non-zero entire function on $\mathbf{F}^m.$
Then, there exists a countable index set $I,$ for each
$i$ in $I,$ there exist irreducible elements $P_i$ in the ring of entire
functions on $\mathbf{F}^m,$ and for each $i$ in $I,$ there exist
natural numbers $e_i$ such that 
 such that if $i\ne j$, then
$P_i$ and $P_j$ are relatively prime, and such that
$$
	f = \prod_{i\in I} P_i^{e_i}.
$$
Moreover, if $J$ is a countable index set, if for each $j$ in $J,$
there are irreducible entire functions $Q_j,$ and if for each $j$ in $J,$
there are natural numbers $d_j$ such that
$$
	f=\prod_{j\in J}Q_j^{d_j}
$$
and such that for $i\ne j$ in $J,$ we have $Q_i$ and $Q_j$ relatively
prime, then there is a bijection $\sigma:I\rightarrow J$ such that
$P_i = Q_{\sigma(i)}$ and $e_i=d_{\sigma(i)}.$
\end{theorem}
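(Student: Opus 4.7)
The plan is to glue the local factorizations in the $\mathcal{A}^m(r_i)$ into a single global factorization using Lemma~\ref{lu}. Pick an increasing sequence $r_1<r_2<\cdots$ in $|\mathbf{F}^\times|$ with $r_i\to\infty$; by Theorem~\ref{ufd}, in each $\mathcal{A}^m(r_i)$ the function $f$ has a unique factorization into finitely many irreducibles, whose set I denote $S_i$. Corollary~\ref{irreducible} gives a surjective ``parent'' map $\phi_i\colon S_i\to S_{i+1}$ sending $p\in S_i$ to the unique irreducible factor of $f$ at level $r_{i+1}$ that is divisible by $p$ in $\mathcal{A}^m(r_i)$, and this map preserves the multiplicity of the factor in $f$. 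Declare two elements of $\bigsqcup_i S_i$ equivalent if their iterated $\phi$-chains eventually coincide; there are at most countably many equivalence classes $[C]$, and these will index the irreducible entire factors of $f$.

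For each class $[C]$, let $I_i([C])\subset S_i$ be its members at level $r_i$ (a finite set), let $e([C])$ be the common multiplicity of these in $f$, and set $\tilde Q_i([C])=\prod_{p\in I_i([C])} p$. The main obstacle is to establish the Lemma~\ref{lu} compatibility
$$\tilde Q_i([C])=u_{i,i+1}\,\tilde Q_{i+1}([C])\qquad\textnormal{in~}\mathcal{A}^m(r_i)$$
with $u_{i,i+1}$ a unit. This rests on two consequences of Corollary~\ref{irreducible}: (a) for each $p'\in S_{i+1}$, the irreducible factorization of $p'$ in $\mathcal{A}^m(r_i)$ is $(\textnormal{unit})\cdot\prod_{p\in\phi_i^{-1}(p')} p$, with each factor appearing exactly once; and (b) $I_i([C])=\phi_i^{-1}(I_{i+1}([C]))$, which is immediate from the definition of the equivalence. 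Multiplying (a) over $p'\in I_{i+1}([C])$ yields the compatibility, and Lemma~\ref{lu} then produces an entire function $P_{[C]}$ equal to $\tilde Q_i([C])$ up to a unit in each $\mathcal{A}^m(r_i)$.

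Grouping $f=c_i\prod_{p\in S_i} p^{e_p}$ by class gives $f=c_i\prod_{[C]}\tilde Q_i([C])^{e([C])}$, so $f$ and $\prod_{[C]} P_{[C]}^{e([C])}$ differ by an entire function that is a unit in every $\mathcal{A}^m(r)$; Proposition~\ref{unit} (letting $r\to\infty$ in its coefficient bound) forces such a function to be a nonzero constant, which I absorb into a single factor. To see each $P_{[C]}$ is irreducible as an entire function, observe that $|I_i([C])|$ must strictly decrease whenever it exceeds $1$, for otherwise two distinct elements would generate parallel $\phi$-chains that never coincide, contradicting the defining property of the class. Hence for all large $i$, $P_{[C]}$ equals a single irreducible of $S_i$ up to a unit; so if $P_{[C]}=AB$ with $A,B$ entire, then at each such level one of $A,B$ is a unit, and a function that is a unit at arbitrarily large levels is automatically a unit at every smaller level as well and therefore at every level, hence a constant by Proposition~\ref{unit}.

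For uniqueness, given a second factorization $f=\prod_{j\in J} Q_j^{d_j}$, the stabilization argument just used, applied to each entire irreducible $Q_j$, shows that for all large $i$, $Q_j$ itself is a single element of $S_i$, determining a class $[C(Q_j)]$. The assignment $j\mapsto [C(Q_j)]$ is injective: if two distinct $Q_j,Q_{j'}$ represented the same class, then $Q_j/Q_{j'}$ would be a unit at every large level, hence a nonzero constant, contradicting relative primality. Surjectivity and the matching $d_j=e([C(Q_j)])$ then follow by comparing $\prod_{[C]} P_{[C]}^{e([C])}=\prod_j Q_j^{d_j}$ in a single $\mathcal{A}^m(r_i)$ and invoking the unique factorization in that ring.
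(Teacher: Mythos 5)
Your overall strategy is the same as the paper's: extract compatible local factorizations in $\mathcal{A}^m(r_i)$, organize the local irreducibles by the ``parent'' correspondence of Corollary~\ref{irreducible}, and glue each class into an entire irreducible via Lemma~\ref{lu}. (Your equivalence classes $[C]$ are, level by level, the same as the paper's $f_i$, since the paper's $f_i$ is the stable restriction $p_j|_{\mathcal{A}^m(r_i)}$ for $j$ large, whose irreducible factors in $\mathcal{A}^m(r_i)$ are exactly the elements of $I_i([C])$.) The construction and the compatibility check via Proposition~\ref{wpfactor}/Corollary~\ref{irreducible} are fine, as is the argument that a function which is a unit in $\mathcal{A}^m(r)$ for every $r$ is a nonzero constant.

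The gap is in the irreducibility argument, which then also infects your uniqueness argument. You claim ``$|I_i([C])|$ must strictly decrease whenever it exceeds $1$, for otherwise two distinct elements would generate parallel $\phi$-chains that never coincide.'' This does not follow: two distinct elements of $I_i([C])$ can map to two distinct elements of $I_{i+1}([C])$ and then merge several levels later, so there is no contradiction in $|I_{i+1}([C])|=|I_i([C])|$. Worse, $|I_i([C])|$ need not even be non-increasing, since $I_{i+1}([C])$ can pick up new irreducibles that are units in $\mathcal{A}^m(r_i)$ (i.e., that lie outside the image of $\phi_i$) but whose chains later merge into the class. So the conclusion that $|I_i([C])|=1$ for all large $i$ is unjustified, and without it the factorization-at-one-level argument for irreducibility of $P_{[C]}$, and likewise the ``stabilization'' identification $Q_j\mapsto[C(Q_j)]$ in your uniqueness argument, have no support. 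The paper avoids this entirely with a direct divisibility argument (this is the content of Proposition~\ref{existence}): if $P=gh$ with $g,h$ entire, then $p_{i_0}$ divides $g$ or $h$ in $\mathcal{A}^m(r_{i_0})$, say $g$; applying Corollary~\ref{irreducible} to $g$ shows $p_i\mid g$ in $\mathcal{A}^m(r_i)$ for all $i\ge i_0$, hence $f_i\mid g$, hence $P\mid g$ in every $\mathcal{A}^m(r_i)$; canceling gives that $h$ is a unit in every $\mathcal{A}^m(r_i)$ and hence a constant. Uniqueness then falls out of the uniqueness statement in Proposition~\ref{existence}, rather than from any stabilization of $|I_i([C])|$.

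A second, more minor point: you write that $f$ and $\prod_{[C]}P_{[C]}^{e([C])}$ differ by a function that is a unit at every level, but for the infinite product to converge in each $\mathcal{A}^m(r)$ you need to control the norms of the factors that are units there; without first normalizing so that, say, $f(z_0)=1$ and $P_{[C]}(z_0)=1$ at a fixed point $z_0$ where $f(z_0)\ne0$ (as the paper does), the units multiplying in could be arbitrary scalars and the partial products need not converge. With that normalization, Proposition~\ref{unit} gives $|P_{[C]}-1|_{r_j}<r_j/r_k$ once $P_{[C]}$ is a unit in $\mathcal{A}^m(r_k)$, which is what makes the tail of the product go to $1$.
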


\begin{remark*} I recall here that the meaning of the infinite products
in Theorem~\ref{almostufd} is that the finite partial products converge
to $f$ in $\mathcal{A}^m(r)$ for all $r\in|\mathbf{F}^\times|.$
\end{remark*}

I will begin with a proposition describing how to find the irreducible
factors.

\begin{proposition}\label{existence}
Let $f$ be a non-zero entire function on $\mathbf{F}^m.$
Let $r_i$ be an increasing sequence of elements of $|\mathbf{F}^\times|$
such that $r_i\to\infty.$ Let $p_{i_0}$ be an irreducible factor of
$f$ in $\mathcal{A}^m(r_{i_0}).$ Then, up to multiplication by a unit,
there exists a unique
irreducible entire function $P$ such that $p_{i_0}$ divides $P$ and
such that $P$ divides $f.$
\end{proposition}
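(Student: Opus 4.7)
The plan is to construct $P$ via Lemma~\ref{lu} as a limit of compatible irreducible factors at ever larger scales, to verify irreducibility from the combinatorial structure of how local factorizations merge, and to obtain uniqueness from Theorem~\ref{entire}.

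For each $\ell\ge i_0$, applying Corollary~\ref{irreducible} at the pair of radii $r_{i_0}<r_\ell$ yields the unique (up to units) irreducible factor $P_\ell$ of $f$ in $\mathcal{A}^m(r_\ell)$ whose restriction to $\mathcal{A}^m(r_{i_0})$ is divisible by $p_{i_0}$. The same corollary at radii $r_\ell<r_{\ell'}$ forces $P_\ell\mid P_{\ell'}$ with multiplicity one in $\mathcal{A}^m(r_\ell)$. Thus for fixed $\ell$, as $\ell'\ge\ell$ grows, the divisors $P_{\ell'}$ form an increasing chain in the factorial ring $\mathcal{A}^m(r_\ell)$, all dividing $f$; since $f$ has only finitely many irreducible factors there, this chain stabilizes at some $\tilde P_\ell\in\mathcal{A}^m(r_\ell)$. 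One then checks that $\tilde P_{\ell'}$ is a unit multiple of $\tilde P_\ell$ in $\mathcal{A}^m(r_\ell)$, so Lemma~\ref{lu} applied to the sequence $(\tilde P_\ell)$ produces an entire function $P$ that agrees with $\tilde P_\ell$ up to a unit in each $\mathcal{A}^m(r_\ell)$. Then $P\mid f$ as entire functions follows by assembling the coherent quotients $f/\tilde P_\ell$ into an entire function, and $p_{i_0}\mid P$ in $\mathcal{A}^m(r_{i_0})$ is automatic.

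The hard part is showing $P$ is irreducible. By the multiplicity-one clause of Corollary~\ref{irreducible}, each $\tilde P_\ell$ factors in $\mathcal{A}^m(r_\ell)$ as a product $\prod_{q\in S_\ell}q$ of distinct irreducibles of $f$. Suppose $P=AB$ with $A,B$ both non-unit entire; then in $\mathcal{A}^m(r_\ell)$ the product $AB$ is a unit multiple of $\tilde P_\ell$, so unique factorization partitions $S_\ell$ into disjoint subsets $T_\ell^A,T_\ell^B$ by which of $A,B$ each irreducible divides. Applying Corollary~\ref{irreducible} now to $A$ viewed as a divisor of $f$ shows that the ``merging map'' $\phi_{\ell,\ell+1}\colon S_\ell\to S_{\ell+1}$, sending each $q$ to the unique irreducible of $f$ in $\mathcal{A}^m(r_{\ell+1})$ that $q$ divides, satisfies $\phi^{-1}(T_{\ell+1}^A)=T_\ell^A$; in particular, any two $q,q'$ with $\phi(q)=\phi(q')$ lie in the same partition class. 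But by construction every $q\in S_\ell$ eventually has $\phi_{\ell,j}(q)=P_j$ for $j$ large, so any two elements of $S_\ell$ are connected by a ``go up to $P_j$, then come back down'' chain of $\phi$-coincidences, each step of which preserves the partition. Hence the partition is trivial: one of $T_\ell^A,T_\ell^B$ is empty at every scale. Say $T_\ell^B=\emptyset$ for all $\ell$; then $B$ is a unit in each $\mathcal{A}^m(r_\ell)$, so by Proposition~\ref{unit} it is nowhere-vanishing on $\mathbf{F}^m$. Slicing and using the one-variable product factorization recalled in the introduction then forces $B$ to be a nonzero constant, contradicting non-unit-ness.

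Uniqueness is immediate from Theorem~\ref{entire}: if $P'$ is another irreducible entire function with the stated properties, then $\gcd(P,P')$ is divisible by $p_{i_0}$ at scale $r_{i_0}$ and hence is not a unit, so the irreducibility of both $P$ and $P'$ forces them to be associates.
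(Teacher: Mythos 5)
Your construction of $P$ --- taking the unique Corollary~\ref{irreducible} lifts $P_\ell$, stabilizing them at each scale to get $\tilde P_\ell$, and invoking Lemma~\ref{lu} --- is precisely the paper's argument (your $P_\ell$ and $\tilde P_\ell$ are the paper's $p_\ell$ and $f_\ell$), and that part is fine. Where you diverge is in proving irreducibility and uniqueness. For irreducibility, the paper argues directly: if $P=gh,$ then $p_{i_0}$ divides one of $g,h,$ say $g;$ the uniqueness clause of Corollary~\ref{irreducible} then forces $p_i$ and hence $f_i$ to divide $g$ in $\mathcal{A}^m(r_i)$ for every $i,$ so $P$ divides $g$ in every ball and $h$ is a unit. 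Your combinatorial ``merging map'' argument is correct --- the partitions $T_\ell^A,T_\ell^B$ are indeed respected by $\phi_{\ell,\ell+1},$ and the coalescence of every chain at $P_j$ does trivialize them --- but it is an appreciably heavier machine for the same coherence that the paper extracts in two lines by chasing divisibility of $g$ directly. Your closing step there is also an unneeded detour: once $B$ is a unit in every $\mathcal{A}^m(r_\ell),$ its local inverses already glue to an entire inverse (or, from Proposition~\ref{unit}, letting $r\to\infty$ kills every higher-order coefficient), so there is no need to slice and invoke the one-variable product formula. For uniqueness, you invoke Theorem~\ref{entire} to form $\gcd(P,P')$ and observe it is a non-unit common divisor of two irreducibles; the paper instead reruns the $p_i\mid Q$ divisibility chain to get $P\mid Q$ directly. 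Both are valid and comparable in length, and your appeal to Theorem~\ref{entire} is not circular since that theorem depends only on Lemma~\ref{lu} and Corollary~\ref{gcd}; it has the small virtue of reusing an already-established result, at the cost of obscuring that the construction itself pins $P$ down.
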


\begin{proof}
I will begin by proving existence. By Corollary~\ref{irreducible},
for each $i\ge i_0,$ there exists a unique irreducible factor
$p_i$ of $f$ in $\mathcal{A}^m(r_i)$ such that $p_{i_0}$ divides $p_i$
in $\mathcal{A}^m(r_{i_0}).$ I now claim that if $j>i,$ then
$p_i$ divides $p_j$ in $\mathcal{A}^m(r_i).$ Indeed, using
Corollary~\ref{irreducible} again, there is a unique irreducible
factor $q_j$ of $f$ in $\mathcal{A}^{m}(r_j)$ such that
$p_i$ divides $q_j$ in $\mathcal{A}^m(r_i).$ But then
$p_{i_0}$ divides $q_j$ in $\mathcal{A}^m(r_{i_0}),$ and so
by uniqueness, $p_j=q_j.$ Now, for each $i$ and for each $j\ge i,$
the function $p_j$ is a factor of $f$ in $\mathcal{A}^m(r_i),$
and so a finite product of finitely many irreducible factors
with bounded multiplicity. Hence, for each $i,$ there exists $J_i$
such that for all $j,k\ge J_i$ we have that $p_j$ and $p_k$ differ
by a unit in $\mathcal{A}^m(r_i).$ For each $i,$ let $f_i$
be $p_j$ restricted to $\mathcal{A}^m(r_i)$ for some $j\ge J_i.$
Now, for each $j\ge i \ge i_0,$ we have units $u_{i,j}$ in
$\mathcal{A}^m(r_i)$ such that $f_i = u_{i,j}f_j.$
By Lemma~\ref{lu}, there exists an entire function $P$ and
units $v_i$ in $\mathcal{A}^m(r_i)$ such that $f_jv_j^{-1}=P.$
I claim that $P$ is an irreducible entire function
which divides $f$ and such that $p_{i_0}$ divides $P$ in
$\mathcal{A}^m(r_{i_0}).$

To see that $P$ divides $f$ note that each $f_i$ divides
$f$ in $\mathcal{A}^m(r_i).$ That means there exist functions
$h_i$ in $\mathcal{A}^m(r_i)$ such that $f_ih_i=f.$
But then for $j\ge i,$ we have $Pv_ih_i=f_ih_i=f=f_jh_j=Pv_jh_j,$
which implies $h_iv_i=h_jv_j.$ Hence,
$h_iv_i$ converges to an entire function $H$ such
that $PH=f$ since $PH=Ph_iv_i=f$ in $\mathcal{A}^m(r_i)$ for all $i.$

Since $P$ restricted to $\mathcal{A}^m(r_{i_0})$ is
$f_{i_0}v_{i_0}^{-1},$ clearly $p_{i_0}$ divides $P$ in
$\mathcal{A}^m(r_{i_0}).$

To see that $P$ is irreducible, suppose that there exist entire
functions $g$ and $h$ such that $P=gh.$ Since $p_{i_0}$ divides
$P$ in $\mathcal{A}^m(r_{i_0}),$ we must have that $p_{i_0}$
divides $g$ or $h,$ so assume without loss of generality that
it divides $g.$ But this implies that $p_i$ divides $g$ in
$\mathcal{A}^m(r_i)$ for all $i\ge i_0,$ and hence $f_i$ divides
$g$ for all $i\ge i_0.$ Thus, $P$ divides $g$ in
$\mathcal{A}^m(r_i)$ for all $i\ge i_0.$ In other words, there exist
$g_i$ in $\mathcal{A}^m(r_i)$ such that $g=Pg_i$
in $\mathcal{A}^m(r_i).$ But, $P=gh=Pg_ih,$ and so
$g_ih=1$ for all $i\ge i_0.$
It then follows that $h$ is a unit in the ring of entire
functions, and so $P$ must be irreducible.

Finally, it remains to check uniqueness. Let $P$ be as constructed
above and suppose there is another irreducible entire function $Q$
such that $p_{i_0}$ divides $Q$ in $\mathcal{A}^m(r_{i_0})$ and such
that $Q$ divides $f$ in the ring of entire functions. As above,
since $p_{i_0}$ divides $Q,$ we have that $p_i$ divides $Q$ for all
$i\ge i_0.$ Hence $f_i$ divides $Q$ for all $i\ge i_0.$ Hence
$P$ divides $Q,$ in which case $P$ and $Q,$ both being irreducible,
differ by a unit, as was to be shown.
\end{proof}

\begin{proof}[Proof of Theorem~\ref{almostufd}.]
For $k=1,2,\dots,$ let
$r_k$ be an increasing sequence of elements in $|\mathbf{F}^\times|$
such that $r_k\to\infty.$ Since $f$ is not identically zero,
let $z_0$ be an element of $\mathbf{B}^m(r_1)$ such that
$f(z_0)\ne0.$ Without loss of generalizty, assume that $f(z_0)=1.$
We proceed to inductively construct
a countable set $\mathcal{P}$ of ordered pairs $(P,e),$
where $P$ is an irreducible entire factor of $f$ and $e$ is a 
natural number. Start by setting $\mathcal{P}=\emptyset.$
Now we add to $\mathcal{P}$ as follows. Let $k$ be the smallest
natural number such that there is an irreducible factor $p$ of $f$
in $\mathcal{A}^m(r_k)$ which does not divide any of the
$P\in\mathcal{P}.$ By Proposition~\ref{existence}, there exists
a unique irreducible entire function $P$ such that $p$ divides $P$
in $\mathcal{A}^m(r_k)$ and such that $P$ divides $f.$
Since $f(z_0)\ne0,$ we also have $P(z_0)\ne0,$ and so we may assume,
without loss of generalizty, that $P(z_0)=1.$ Now if $e$ is the multiplicity
with which $p$ divides $f$ in $\mathcal{A}^m(r_k),$ for a reason similar
to the analagous statement in Corollary~\ref{irreducible},
$P$ divides $f$ with exact multiplicity $e$ in the ring of
entire functions on $\mathbf{F}^m.$ Thus, add the ordered pair
$(P,e)$ to the set $\mathcal{P},$ and repeat the process.
As we have only countably many $r_k$
and only finitely many irreducible factors of $f$ in each
$\mathcal{A}^m(r_k),$ this process will terminate with a countable
set $\mathcal{P}.$

I claim that, up to a unit,
$$
	f = \prod_{(P,e)\in\mathcal{P}} P^e.
$$
Index the elements $(P_i,e_i)$ of $\mathcal{P}$ by a countable index set $I.$
Since any finite product
$$
	\prod_{i=1}^s P_i^{e_i}
$$
divides $f,$ we have entire functions $g_s$ such that
$$
	f = g_s \prod_{i=1}^s P_i^{e_i}.
$$
Also, for each $k,$ there exists $S_k$ such that for all $s\ge S_k,$ we
have that $g_s$ is a unit in $\mathcal{A}^m(r_k).$ Since $g_s(z_0)=1,$
we conclude, by Propostion~\ref{unit},
that for $k\ge j$ and for $s\ge S_k$ that
$$
	|1-g_s|_{r_j} < \frac{r_j}{r_k},
$$
which will tend to zero as $k$ tends to infinity. Since only finitely
many of the $P_i$ are not units in $\mathcal{A}^m(r_j),$ 
there exists $s_0$ such that for $s> s_0,$ we
have, again by Proposition~\ref{unit}, that
$|P_s|_{r_j}=1.$ Hence, we find that
for $k\ge j$ and $s\ge S_k,$ that
$$
	\left|f-\prod_{i=1}^sP_i^{e_i}\right|_{r_j}
	= \left|\prod_{i=1}^{s_0}P_i^{e_i}\right|_{r_j}\cdot
	\left|\prod_{i=s_0+1}^sP_i^{e_i}\right|_{r_j}\cdot
	|g_s-1|_{r_j}
	< \left|\prod_{i=1}^{s_0}P_i^{e_i}\right|_{r_j}\frac{r_j}{r_k}
	\to 0,
$$
and hence the product converges to $f,$ as was to be shown.

To show uniqueness, it suffices to show that if $Q$ is an irreducible
entire function dividing $f,$ then $Q$ is, up to multiplication by
a unit, equal to one of the $P_i$
constructed above. So, suppose $Q$ is an irreducible entire function
dividing $f.$ Let $r_k$ be large enough so that $Q$ is not a unit in
$\mathcal{A}^m(r_k).$ Then, there is an irreducible factor $p$ of $f$
in $\mathcal{A}^m(r_k)$ which divides $Q.$ By construction, there is
a unique $P_i$ such that $p$ divides $P_i.$ Also, by construction
(and by Proposition~\ref{existence}),
$P_i$ divides $Q$ in the ring of entire functions 
since $p$ divides $Q.$ But since $P_i$ and $Q$ are both irreducible
entire functions, we then have that $P_i$ and $Q$ differ by a unit,
as was to be shown.
\end{proof}

\end{document}